\newtheorem{theorem}{Theorem}
\newtheorem{corollary}{Corollary}
\newtheorem{lemma}{Lemma}
\newtheorem{proposition}{Proposition}
\theoremstyle{definition}
\newtheorem{definition}{Definition}
\theoremstyle{remark}
\newtheorem{remark}{Remark}
\newtheorem{example}{Example}
\title{A topological reading of inductive and coinductive definitions in Dependent Type Theory}
\author{Pietro Sabelli}
\affil{University of Padua\\Dipartimento di Matematica ``Tullio Levi-Civita''}
\begin{document}

\maketitle

\begin{abstract}
In the context of dependent type theory, we show that coinductive predicates have an equivalent topological counterpart in terms of coinductively generated positivity relations, introduced by G. Sambin to represent closed subsets in point-free topology. Our work is complementary to a previous one with M.E. Maietti, where we showed that, in dependent type theory, the well-known concept of wellfounded trees has a topological equivalent counterpart in terms of proof-relevant inductively generated formal covers used to provide a predicative and constructive representation of complete suplattices. All proofs in Martin-Löf's type theory are formalised in the Agda proof assistant.
\end{abstract}

\section{Introduction}
Our work aims to provide a topological presentation of coinductive predicates in dependent type theory by using tools of point-free topology developed in the field of Formal Topology.

Formal Topology is the study of topology in a constructive and predicative setting (see for example \cite{somepoints}). Its main object of investigation is a point-free notion of topological space, called \textit{basic topology}, whose definition avoids impredicative uses of the power constructor. In particular, a basic topology consists of two relations, called \textit{basic cover} and \textit{positivity relation}, primitively representing its open and closed subsets, respectively. Powerful techniques for inductively generating basic covers and coinductively generating positivity relations have been developed in \cite{CSSV03, dynf} and have since been a cornerstone of the field. 

In previous work with M.E. Maietti (\cite{topcount}), we compared the inductive generation of basic covers with other established inductive schemes in dependent type theory, proving an equivalence between inductive basic covers and \textit{inductive predicates} – conceived in the context of axiomatic set theories \cite{aczelind,rathjen} and adapted by us in the Minimalist Foundation; and between inductive basic covers and the type of well-founded trees, known as $\mathsf{W}$-types, in Martin-Löf's type theory assuming either $\eta$-equalities or function extensionality.

In this work, we extend the comparison to their coinductive counterparts; namely, we relate the coinductive generation of positivity relations with \textit{coinductive predicates} and non-wellfounded trees, known as $\mathsf{M}$-types, defined in various extensions of Martin-Löf's type theory, most notably in Homotopy Type Theory \cite{m-types}. In particular, we prove the equivalence of coinductive predicates and coinductive positivity relations in the Minimalist Foundation and, in turn, the definability of their proof-relevant versions as particular $\mathsf{M}$-types in Martin-Löf's type theory.

Since we were required to compare concepts taken from different foundational systems for mathematics, we extensively relied on the Minimalist Foundation as a ground theory to express those various notions. The Minimalist Foundation, conceived in \cite{mtt} and finalised in \cite{m09}, is a foundational system precisely designed to constitute a common core between all the most relevant foundations for constructive mathematics. This means that definitions, theorems and proofs written in its language can be exported soundly in a foundation of one's choice.


All the proofs performed within Martin-Löf's type theory have been checked in Agda. The source code is available on \\ \href{https://github.com/PietroSabelli/topological-co-induction}{https://github.com/PietroSabelli/topological-co-induction}.

\paragraph*{Structure of the paper.}
In Section \ref{prelim}, we precise the dependent type theories in which we work and lay down some notation that we will exploit throughout the paper. In Section \ref{mfsec}, working in the Minimalist Foundation, we introduce (co)inductive predicates and prove their equivalence with the (co)inductive methods of Formal Topology. In Section \ref{hottsec}, working in the proof-relevant environment of Martin-Löf's type theory, we generalise the results on the constructors of the previous section, relating them with $\mathsf{W}$-types and $\mathsf{M}$-types. Finally, in Section \ref{compsec}, we draw some conclusions about the compatibility of the Minimalist Foundation extended with (co)inductive constructors.

\section{Preliminaries}\label{prelim}

In this paper, we will mainly work within the Minimalist Foundation and Martin-Löf's type theory. We should briefly recall them.

\paragraph*{Minimalist Foundation.} The Minimalist Foundation is a two-level foundation which consists of an extensional level $\mathbf{eMF}$ (called $\mathbf{emTT}$ in \cite{m09}), understood as the actual foundation in which constructive mathematics is formalised and developed, and an intensional level $\mathbf{iMF}$ (called $\mathbf{mTT}$ in \cite{mtt} and in \cite{m09}), which acts as a programming language enjoying a realizability interpretation á la Kleene (see \cite{MMM, IMMS}). Both levels are formulated as dependent type theories à la Martin-Löf, and they are linked by a setoid interpretation, which implements proofs of the extensional level as proofs/programs of the intensional one.

In both $\mathbf{eMF}$ and $\mathbf{iMF}$, there are four kinds of types: small propositions, propositions, sets and collections (denoted respectively $\mathsf{prop_s}$, $\mathsf{prop}$, $\mathsf{set}$ and $\mathsf{col}$). Sets are particular collections, just as small propositions are particular propositions. Moreover, we identify a proposition (respectively, a small proposition) with the collection (respectively, the set) of its proofs. Eventually, we have the following square of inclusions between kinds.
\[
\begin{tikzcd}
\mathsf{prop} \arrow[r, hook]                   & \mathsf{col}                 \\
\mathsf{prop_s} \arrow[r, hook] \arrow[u, hook] & \mathsf{set} \arrow[u, hook]
\end{tikzcd}
\]
This fourfold distinction allows one to differentiate, on the one hand, between logical and mathematical entities. On the other, between different degrees of complexity (corresponding to the usual distinction between sets and classes in set theory – or the one between small and large types in Martin-Löf's type theory with a universe), thus guaranteeing the predicativity of the theory.

Propositions are those of predicate logic with equality; a proposition is small if all its quantifiers and equality predicates are restricted to sets. The base sets are just the empty set $\mathsf{N}_0$ and the singleton set $\mathsf{N}_1$; the set constructors are the dependent sum $\Sigma$, the dependent product $\Pi$, the disjoint sum $+$, the list constructor $\mathsf{List}$, and, only in the extensional level, a constructor $-/-$ for quotient a set by a small equivalence relation. The only base (proper) collection is, for the intensional level, a universe á la Russell of small propositions $\mathsf{Prop_s}$, and, for the extensional level, a classifier $\mathcal{P}(\mathsf{N_1})$ of small propositions \textit{up to equiprovability}, which, as its notation suggests, can be thought of as the collection of singleton's subsets; collections are closed only under dependent sums and under function spaces of sets towards $\mathsf{Prop_s}$ or $\mathcal{P}(\mathsf{N_1})$, depending on the level.

In the intensional level, propositions are proof-relevant, the equality predicate is intensional, and the only computation rules are $\beta$-equalities. While in the extensional level, propositions are proof-irrelevant, the equality predicate is extensional, and also all $\eta$-equalities are valid.

A crucial remark is the fact that, in the Minimalist Foundation, elimination rules of propositional constructors act only towards propositions; in this way, neither the axiom of choice nor of unique choice is validated since, in general, the system cannot internally extract a witness from a proof an existential statements \cite{choice}.

\paragraph*{Martin-Löf's type theory.} In this work, we consider a version $\mathbf{MLTT_0}$ of intensional Martin-Löf's type theory \cite{MLTT} with the following type constructors: the empty type $\mathsf{N_0}$, the unit type $\mathsf{N_1}$, dependent sums $\Sigma$, dependent products $\Pi$, the list constructor $\mathsf{List}$, identity types $\mathsf{Id}$, disjoint sums $+$, and a universe of small types $\mathsf{U_0}$ á la Russell closed under all the above type constructors. Inductive type constructors are defined to allow elimination toward all (small and large) types.

The intensionality of the theory means that judgmental equality is not reflected by propositional equality. We do instead assume $\eta$-equalities for $\mathsf{N_1}$, $\Sigma$-types, and $\Pi$-types. These weaker extensional assumptions are justified in an intensional context since they do not break any computational property, even being assumed by default in the Agda implementation of Martin-Löf's type theory.

Finally, we will also consider extending the theory with the axiom of function extensionality $\mathsf{funext}$, and the additional elimination scheme for the identity type known as \textit{axiom $\mathsf{K}$} introduced in \cite{streicher}.

\paragraph*{Compatibility.}\label{comp}
As already mentioned, the Minimalist Foundation is compatible with various foundational theories. Formally, this means that there are interpretations of the former into the latter, which preserve the meaning of logical and mathematical entities.
Moreover, when comparing the Minimalist Foundation with another foundational theory, one can choose, depending on the degree of abstraction of the latter, which of the two levels (intensional or extensional) to compare it with.

With the above provisions, we have the following compatibility results \cite{m09}.
\begin{itemize}
\item $\mathbf{iMF}$ is compatible with $\mathbf{MLTT_0}$ by identifying propositions and collections with types and small propositions and sets with small types;
\item $\mathbf{iMF}$ is compatible with the Calculus of Inductive Constructions $\mathbf{CIC}$ \cite{CoC};
\item $\mathbf{eMF}$ is compatible with constructive set theory $\mathbf{CZF}$ \cite{czf} by interpreting sets as ($\mathbf{CZF}$-)sets, collections as classes, propositions as first-order logic formulas and small propositions as bounded formulas.
\item $\mathbf{eMF}$ is compatible with the calculus of topoi $\mathcal{T}_{\mathbf{Topos}}$ as defined in \cite{modular} by identifying propositions with mono-types;
\item both $\mathbf{iMF}$ and $\mathbf{eMF}$ are compatible with Homotopy Type Theory by interpreting collections as h-sets, sets as h-sets in the first universe, propositions as h-propositions, small propositions as h-propositions in the first universe, and judgmental equalities of $\mathbf{eMF}$ as canonical propositional equalities of Homotopy Type Theory \cite{MFhott}.
\end{itemize}

\paragraph*{Encodings.}
The following notion formally says when a theory is expressive enough to define a given constructor.

If $\mathcal{T}$ is a dependent type theory and $\mathsf{C}$ is a type constructor – intended as the sets of its rules defined in the language of $\mathcal{T}$ – we say that $\mathcal{T}$ \textit{encodes} $\mathsf{C}$ if each new symbol appearing in $\mathsf{C}$ can be interpreted in $\mathcal{T}$ in such a way that all the rules of $\mathsf{C}$ are valid under this interpretation. 
Given two type constructors $\mathsf{C}$ and $\mathsf{D}$, we say that they are \textit{mutually encodable over} $\mathcal{T}$ if $\mathcal{T}+\mathsf{C}$ encodes $\mathsf{D}$ and $\mathcal{T}+\mathsf{D}$ encodes $\mathsf{C}$.

As an example, consider the set of natural numbers $\mathbb{N}$ defined as an inductive type in the usual way; it is encodable in $\mathbf{eMF}$ using the following interpretation.
\begin{align*}
\mathbb{N} & :\equiv \mathsf{List}(\mathsf{N_1}) \\
0 & :\equiv \epsilon \\
\mathsf{succ}(n) & :\equiv \mathsf{cons}(n,\star) \\
\mathsf{El}_\mathbb{N}(n,b,(x,z).c) & :\equiv \mathsf{El}_\mathsf{List}(n,b,(x,y,z).c)
\end{align*}
\\

\paragraph*{Notation.}
We will use the following conventions and shorthands to improve readability:
\begin{itemize}
\item when writing inference rules, the piece of context common to all the judgments is omitted;
\item we reserve the arrow symbol $\to$ \textit{(resp. $\times$)} as a shorthand for a non-dependent function space \textit{(resp. for non-dependent product sets)}, and we denote the implication connective with the arrow symbol $\Rightarrow$;
\item if $a \in A$ and $f \in (\Pi x \in A)B(x)$, we will often write $f(a)$ as a shorthand for $\mathsf{Ap}(f,a)$; analogously for $\mathsf{Ap}_\forall(f,a)$ and $\mathsf{Ap}_\Rightarrow(f,a)$; when lambda abstracting, we omit the domain and the kind of lambda abstraction $\lambda$, $\lambda_\forall$, $\lambda_\Rightarrow$; analogously, we omit the kind of pair constructors $\langle-,- \rangle$, $\langle-,_\wedge- \rangle$ and $\langle-,_\exists- \rangle$.
\item we define the two projections from a dependent sum in the usual way $\pi_{\mathsf{1}}(z) :\equiv \mathsf{El}_{\Sigma}(z,(x,y).x)$ and
$\pi_{\mathsf{2}}(z) :\equiv \mathsf{El}_{\Sigma}(z,(x,y).y)$;
\item negation, the true constant, and logical equivalence are defined in the usual way
$\neg \varphi :\equiv \varphi \Rightarrow \bot$,
$\top :\equiv \neg\bot$ and $\varphi \Leftrightarrow \psi :\equiv \varphi \Rightarrow \psi \wedge \psi \Rightarrow \varphi$;
\item we will sometimes render the judgment $\mathsf{true} \in \varphi \; [\Gamma]$ as $\varphi \; \mathsf{true} \; [\Gamma]$;
\item as usual in $\mathbf{eMF}$, we let $\mathcal{P}(A) :\equiv A \to \mathcal{P}(\mathsf{N_1})$ to denote the power-collection of a set $A$, and $a \,\varepsilon\, V :\equiv \mathsf{Ap}(V,a)$ to denote the (propositional) relation of membership between terms $a \in A$ and subsets $V \in \mathcal{P}(A)$; moreover, given $\varphi(x) \; \mathsf{prop} \; [x \in A]$ we will use the following set-theoretic shorthands
\begin{align*}
\{ x \in A \,|\, \varphi(x) \}& :\equiv \lambda x.[\varphi] \in \mathcal{P}(A)\\
\emptyset & :\equiv \{ x \in A \,|\, \bot \} \in \mathcal{P}(A) \\
A & :\equiv  \{ x \in A \,|\, \top \} \in \mathcal{P}(A) \\
(\forall x \,\varepsilon\, V)\varphi(x) & :\equiv (\forall x \in A)(x \,\varepsilon\, V \Rightarrow \varphi(x)) \; \mathsf{prop} \\
(\exists x \,\varepsilon\, V)\varphi(x) & :\equiv (\exists x \in A)(x \,\varepsilon\, V \,\wedge\; \varphi(x)) \; \mathsf{prop}
\end{align*}
\\
\end{itemize}

\section{(Co)Induction in the Minimalist Foundation}\label{mfsec}
The basic idea behind any (co)inductive construction is to generate an object by proceeding from a given set of rules – intuitively, with induction following them forward and coinduction backwards. How such rules are specified and used heavily depends on \textit{(1)} the kind of mathematical object to be constructed, and \textit{(2)} the setting in which this construction is formalised.

Concerning the first point, a key distinction is between the (co)inductive generation of sets and the (co)inductive generation of predicates (or, equivalently, subsets). The present section focuses on the latter. Paradigmatic examples of inductively and coinductively defined sets are lists (of which natural numbers are the most basic instance) and streams, respectively. On the other hand, the fundamental example of (co)inductive predicates we would consider is given by deductive systems, which, through their rules, for a given set of formulae declare inductively which are the theorems and coinductively which are the refutables.

Concerning the second point, each foundational theory has its peculiar ways of implementing (co)induction, e.g. (non-)wellfounded trees in Martin-Löf's type theories, Higher Inductive Types in Homotopy Type Theory, or Generalized Inductive Definitions in set theory. We are interested in adapting the latter scheme, first introduced in \cite{aczelind} and then adapted in a constructive setting in \cite{rathjen, czf}, to dependent type theory. To this end, in this section, we define it in the Minimalist Foundation, first at the extensional level, then at the intensional one. Then, in the next section, we will show how to define its proof-relevant version in Martin-Löf's type theory.

\subsection{(Co)Inductive predicates in the extensional level} 

The starting point is to specify how to declare rules for the (co)inductive generation of predicates. We do so using a notion that goes under two names, corresponding to two different – although related – interpretations. The first name is \textit{rule set}\footnote{In \cite{rathjen}, the name is changed to \textit{inductive definition}; we stick to the original terminology since it helps intuition, and our treatment is not limited to induction.}, and, in this sense, it is a straightforward adaptation in the setting of the Minimalist Foundation of the homonym notion in \cite{aczelind}. The second name is \textit{axiom set}, defined in the context of (co)inductive generation of formal topologies in \cite{CSSV03}. We would focus on the first interpretation (and hence use the first name) while presenting (co)inductive predicates. We will turn to the second name when recalling (co)inductive methods in formal topology.
\begin{definition}
A \textnormal{rule} or \textnormal{axiom set} over a given set $A$ consists of the following data:
\begin{enumerate}
\item a dependent family of sets $I(x) \; \mathsf{set} \; [x \in A]$;
\item a dependent family of $A$-subsets $C(x,y) \in \mathcal{P}(A) \; [x \in A,y \in I(x)]$.
\end{enumerate}
Given two elements $a \in A$ and $i \in I(a)$, we say that $i$ is a \textit{rule} with \textit{premises} $C(a,b)$ and \textit{conclusion} $a$; sometimes it is represented pictorially as
\[
\frac{C(a,i)}{a}\;i
\]
\end{definition}
Hopefully, the chosen terminology makes the logical interpretation of a rule set as an internally defined deduction system transparent. For the rest of this section, suppose to have fixed a set $A$ and a rule set $(I,C)$ over it.

Recall that a predicate $P$ on $A$ is just a proposition depending on $A$
\[
P(x) \; \mathsf{prop} \; [x \in A]
\]
A rule set induces two ways of transforming predicates.
\begin{definition}
Given a predicate $P$ on $A$, we define other two predicates $\mathsf{Der}_{I,C}(P)$ and $\mathsf{Conf}_{I,C}(P)$ on $A$ as follows:
\begin{align*}
\mathsf{Der}_{I,C}(P)(x) & :\equiv (\exists y \in I(x))(\forall z \,\varepsilon\, C(x,y))P(z) \; \mathsf{prop} \; [x \in A]\\
\mathsf{Conf}_{I,C}(P)(x) & :\equiv (\forall y \in I(x))(\exists z \,\varepsilon\, C(x,y))P(z) \; \mathsf{prop} \; [x \in A]
\end{align*}
We call them \textnormal{derivability} and \textnormal{confutability from} $P$, respectively.
\end{definition}
As the names suggest, derivability from $P$ tells which elements of $A$ can be derived with exactly one rule application, assuming as axioms the elements for which $P$ holds – in fact, the definition of $\mathsf{Der}_{I,C}(P)(x)$ explicitly reads \textit{there exists a rule with conclusion $x$ such that all its premises are satisfied by $P$}. Dually, confutability from $P$ tells which elements of $A$ can be confuted after exactly one step of backward search, assuming that the elements for which $P$ holds are already refuted – the definition of $\mathsf{Conf}_{I,C}(P)(x)$ reads \textit{all rules with conclusion $x$ have at least one premise for which $P$ holds}.

In accordance with the above interpretation, the constructions $\mathsf{Der}_{I,C}(-)$ and $\mathsf{Conf}_{I,C}(-)$ can be read meta-theoretically as two endomorphisms of the preorder of predicates on $A$: the preorder relation is formally given by
\[
P \leq_A Q :\equiv (\forall x \in A)(P(x) \Rightarrow Q(x)) \; \mathsf{prop} 
\]
and it is straightforward to check that $P \leq_A Q$ implies both \[
\mathsf{Der}_{I,C}(P) \leq_A \mathsf{Der}_{I,C}(Q) \quad \text{and}\quad \mathsf{Conf}_{I,C}(P) \leq_A \mathsf{Conf}_{I,C}(Q)\text{.}\]
Finally, notice that if $P$ is a small proposition, then also $\mathsf{Der}_{I,C}(P)$ and $\mathsf{Conf}_{I,C}(P)$ are. Thanks to these observations, we can exploit the theory and terminology of monotone operators.
\begin{definition}
We say that a predicate $P$ on $A$ is \textnormal{closed} with respect to the rule set $(I,C)$, if it is closed with respect to $\mathsf{Der}_{I,C}(-)$, namely if
\[
\mathsf{Der}_{I,C}(P) \leq_A P \; \mathsf{true}
\]
Dually, we say that $P$ is \textnormal{consistent} with respect to the rule set $(I,C)$, if it is consistent with respect to $\mathsf{Conf}_{I,C}(-)$, namely if
\[
P \leq_A \mathsf{Conf}_{I,C}(P) \; \mathsf{true}
\]
\end{definition}
Observe that, thanks to monotonicity, if $P$ is a closed predicate, we have the following chain of inequalities.
\[
\cdots \leq_A  \mathsf{Der}_{I,C}(\mathsf{Der}_{I,C}(P)) \leq_A \mathsf{Der}_{I,C}(P) \leq_A P
\]
Dually, if $P$ is consistent it, follows
\[
P \leq_A \mathsf{Conf}_{I,C}(P) \leq_A \mathsf{Conf}_{I,C}(\mathsf{Conf}_{I,C}(P))   \leq_A \cdots
\]
Therefore, to be a closed predicate means that the deductive system cannot derive anything new from it. Dually, to be a consistent predicate means that the elements for which it holds, if assumed refuted by the deduction system, will still be after any number of backward search steps. We are then naturally led to interpret the smallest closed predicate as expressing derivability and the greatest consistent predicate as confutability. In the base theory, their existence is not guaranteed, let alone their smallness as propositions. Therefore, we should postulate them. Formally, this is done by introducing two new logical constructors $\mathsf{Ind}$ and $\mathsf{CoInd}$. We now report their precise rules, together with the judgements that formalise the rule set; the latter will then be left implicit in the premises of the former.

\paragraph*{Rule set parameters in eMF}
\[
A\; \mathsf{set} \quad I(x) \; \mathsf{set} \; [x \in A] \quad C(x,y) \in \mathcal{P}(A) \; [x \in A, y \in I(x)] 
\]

\paragraph*{Rules for inductive predicates in eMF}
\[
\textsf{F-$\mathsf{Ind}$}\;\frac{a \in A}{\mathsf{Ind}_{I,C}(a) \; \mathsf{prop_s}}
\]
\[
\textsf{I-$\mathsf{Ind}$}\;\frac{}{  \mathsf{Der}_{I,C}(\mathsf{Ind}_{I,C}) \leq_A \mathsf{Ind}_{I,C} \; \mathsf{true}}
\]
\[
\textsf{E-$\mathsf{Ind}$}\;\frac{P(x) \; \mathsf{prop} \; [x \in A]\quad \mathsf{Der}_{I,C}(P) \leq_A P \; \mathsf{true}}{\mathsf{Ind}_{I,C} \leq_A P \; \mathsf{true}}
\]

\paragraph*{Rules for coinductive predicates in eMF}
\[
\textsf{F-$\mathsf{CoInd}$}\;\frac{a \in A}{\mathsf{CoInd}_{I,C}(a) \; \mathsf{prop_s}}
\]
\[
\textsf{E-$\mathsf{CoInd}$}\;\frac{}{ \mathsf{CoInd}_{I,C} \leq_A \mathsf{Conf}_{I,C}(\mathsf{CoInd}_{I,C})  \; \mathsf{true}}
\]
\[
\textsf{I-$\mathsf{CoInd}$}\;\frac{P(x) \; \mathsf{prop} \; [x \in A]\quad P \leq_A \mathsf{Conf}_{I,C}(P) \; \mathsf{true}}{P \leq_A \mathsf{CoInd}_{I,C} \; \mathsf{true}}
\]

Notice how, again, consistently with the intended interpretation, we can deduce from their rules that $\mathsf{Ind}_{I,C}$ and $\mathsf{CoInd}_{I,C}$ are fixed points for $\mathsf{Der}_{I,C}$ and $\mathsf{Conf}_{I,C}$, respectively.

\begin{remark}\label{imp}
Classically, i.e. assuming the principle of excluded middle, inductive and coinductive predicates are complementary subsets and thus mutually encodable as
\begin{align*}
\mathsf{CoInd}_{I,C}(x) & :\equiv \neg\mathsf{Ind}_{I,C}(x) \; \mathsf{prop} \; [x \in A] \\
\mathsf{Ind}_{I,C}(x) & :\equiv \neg\mathsf{CoInd}_{I,C}(x) \; \mathsf{prop} \; [x \in A]
\end{align*}
To see it, observe that the following equivalences between predicates hold.
\begin{align*}
\neg\mathsf{Der}_{I,C}(P)(x) & \Leftrightarrow \mathsf{Conf}_{I,C}(\neg P)(x) \; \mathsf{true} \; [x \in A] \\
\neg\mathsf{Conf}_{I,C}(P)(x) & \Leftrightarrow \mathsf{Der}_{I,C}(\neg P)(x) \;\,\mathsf{true} \; [x \in A]
\end{align*}
Therefore, a predicate $P$ is closed (resp. consistent) if and only if its complement $\neg P$ is consistent (resp. closed) – and the smallest closed predicate is the complement of the greatest consistent one, and vice versa.

On the other hand, both inductive and coinductive predicates are impredicatively encodable in the calculus $\mathcal{T}_{\mathbf{Topos}}$. This is done in the usual way, interpreting them as the intersection of all closed predicates and the union of all consistent predicates, respectively.
\begin{align*}
\mathsf{Ind}_{I,C}(x) &:\equiv (\forall P \in \mathcal{P}(A))(\mathsf{Der}_{I,C}(P) \leq_A P \Rightarrow x\,\varepsilon\,P)  \; \mathsf{prop} \; [x \in A] \\
\mathsf{CoInd}_{I,C}(x) &:\equiv (\exists P \in \mathcal{P}(A))(P \leq_A \mathsf{Conf}_{I,C}(P) \wedge x\,\varepsilon\,P)  \; \mathsf{prop} \; [x \in A]
\end{align*}
\end{remark}
We end the presentation of (co)inductive predicates in the extensional level by presenting alternative introduction and elimination rules for them, in which some internal quantifiers are externalised. Although its mathematical content is perhaps less compact, they have the advantage, on the one hand, of emphasising the characters of introduction and elimination, thus being more easily suited to design their intensional counterparts, and, on the other hand, of being more similar to the rules for the (co)inductive generation of formal topologies, and thus being more easily comparable with them.
\begin{lemma}\label{altex}
The introduction and elimination rules for inductive predicates in $\mathbf{eMF}$ can equivalently be formulated in the following way.
\[
\mathsf{I}\textsf{-}\mathsf{Ind}'\;\frac{a \in A \quad i \in I(a) \quad (\forall x \,\varepsilon\, C(a,i))\mathsf{Ind}_{I,C}(x) \; \mathsf{true}}{\mathsf{Ind}_{I,C}(a) \; \mathsf{true}}
\]
\[
\mathsf{E}\textsf{-}\mathsf{Ind}'\;\frac{
\begin{aligned}
& P(x) \; \mathsf{prop} \; [x \in A] \\
& P(x) \; \mathsf{true} \; [x \in A, y \in I(x), w \in (\forall z \,\varepsilon\, C(x,y))P(z)] \\
& a \in A \quad \mathsf{Ind}_{I,C}(a) \; \mathsf{true}
\end{aligned}}
{P(a) \; \; \mathsf{true}}
\]
Analogously, for coinductive predicates.
\[
\mathsf{E}\textsf{-}\mathsf{CoInd}'\;\frac{a \in A \quad i \in I(a) \quad \mathsf{CoInd}_{I,C}(a) \; \mathsf{true}}{(\exists x \,\varepsilon\, C(a,i))\mathsf{CoInd}_{I,C}(x) \; \mathsf{true}}
\]
\[
\mathsf{I}\textsf{-}\mathsf{CoInd}'\;\frac{
\begin{aligned}
& P(x) \; \mathsf{prop} \; [x \in A] \\
& (\exists z \,\varepsilon\, C(x,y))P(z) \; \mathsf{true} \; [x \in A, y \in I(x), w \in P(x)] \\
& a \in A \quad P(a) \; \mathsf{true}
\end{aligned}}{ \mathsf{CoInd}_{I,C}(a) \; \mathsf{true}}
\]
\end{lemma}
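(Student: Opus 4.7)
The plan is to handle the four new rules one at a time, showing each is derivable from the original rule of the same sort (plus possibly the dual one) and conversely. In every direction the proof is purely propositional and amounts to moving quantifiers across the turnstile, so the content is really the interplay between $\mathsf{Der}_{I,C}(P)(x) \equiv (\exists y \in I(x))(\forall z \,\varepsilon\, C(x,y))P(z)$ and the rule of existential elimination, and dually between $\mathsf{Conf}_{I,C}(P)(x) \equiv (\forall y \in I(x))(\exists z \,\varepsilon\, C(x,y))P(z)$ and universal elimination.

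For the inductive case I would first derive $\mathsf{I}\textsf{-}\mathsf{Ind}'$ from $\mathsf{I}\textsf{-}\mathsf{Ind}$: given $a \in A$, $i \in I(a)$ and a proof $w$ of $(\forall x \,\varepsilon\, C(a,i))\mathsf{Ind}_{I,C}(x)$, pair $i$ with $w$ to obtain a proof of $\mathsf{Der}_{I,C}(\mathsf{Ind}_{I,C})(a)$, and then apply $\mathsf{I}\textsf{-}\mathsf{Ind}$ instantiated at $a$. In the reverse direction, to recover $\mathsf{I}\textsf{-}\mathsf{Ind}$ from $\mathsf{I}\textsf{-}\mathsf{Ind}'$, I fix $x \in A$, assume $\mathsf{Der}_{I,C}(\mathsf{Ind}_{I,C})(x)$ and eliminate its outer existential into the proposition $\mathsf{Ind}_{I,C}(x)$, supplying $\mathsf{I}\textsf{-}\mathsf{Ind}'$ applied to the extracted witness and proof. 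Similarly, $\mathsf{E}\textsf{-}\mathsf{Ind}'$ is obtained from $\mathsf{E}\textsf{-}\mathsf{Ind}$ by noting that the hypothesis $P(x) \;\mathsf{true}\; [x \in A, y \in I(x), w \in (\forall z \,\varepsilon\, C(x,y))P(z)]$ is, by $\exists$-elimination, equivalent to $\mathsf{Der}_{I,C}(P) \leq_A P\;\mathsf{true}$, so $\mathsf{E}\textsf{-}\mathsf{Ind}$ gives $\mathsf{Ind}_{I,C}(a) \Rightarrow P(a)$. Conversely, to derive $\mathsf{E}\textsf{-}\mathsf{Ind}$ from $\mathsf{E}\textsf{-}\mathsf{Ind}'$, I assume $\mathsf{Der}_{I,C}(P) \leq_A P$, and for arbitrary $x \in A$, $y \in I(x)$, $w \in (\forall z \,\varepsilon\, C(x,y))P(z)$ I form the pair $\langle y, w \rangle \in \mathsf{Der}_{I,C}(P)(x)$ and hit it with the hypothesis, obtaining exactly the third premise of $\mathsf{E}\textsf{-}\mathsf{Ind}'$.

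The coinductive case is entirely dual, now using universal elimination on $\mathsf{Conf}_{I,C}$: $\mathsf{E}\textsf{-}\mathsf{CoInd}'$ follows from $\mathsf{E}\textsf{-}\mathsf{CoInd}$ by instantiating the universally-quantified $y \in I(a)$ at the given $i$; conversely, the general $\mathsf{E}\textsf{-}\mathsf{CoInd}$ is obtained by abstracting over $i \in I(a)$. For $\mathsf{I}\textsf{-}\mathsf{CoInd}'$ from $\mathsf{I}\textsf{-}\mathsf{CoInd}$, one observes that the second premise, $(\exists z \,\varepsilon\, C(x,y))P(z)\;\mathsf{true}\;[x \in A, y \in I(x), w \in P(x)]$, is by $\forall$-introduction over $y$ exactly the assertion $P \leq_A \mathsf{Conf}_{I,C}(P)$, so the original coinduction rule applies; conversely, to derive $\mathsf{I}\textsf{-}\mathsf{CoInd}$ from $\mathsf{I}\textsf{-}\mathsf{CoInd}'$ one instantiates the universally quantified $y$ in $P \leq_A \mathsf{Conf}_{I,C}(P)$ to fit the premise shape of $\mathsf{I}\textsf{-}\mathsf{CoInd}'$.

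There is no serious obstacle: everything is bookkeeping with the introduction and elimination rules for $\exists$ and $\forall$ in $\mathbf{eMF}$, carefully respecting that, in the Minimalist Foundation, elimination of existentials and $\Sigma$-types into propositions is available (both $P$ and $\mathsf{Ind}_{I,C}$/$\mathsf{CoInd}_{I,C}$ are propositions), so no choice-like extraction is needed. The mildest point to be careful about is in the $\mathsf{I}\textsf{-}\mathsf{CoInd}'$-to-$\mathsf{I}\textsf{-}\mathsf{CoInd}$ direction, where one should check that the instantiation of the externalised hypothesis properly reconstructs the internal $\forall y \in I(x)$ required by $\mathsf{Conf}_{I,C}$; this is immediate by $\forall$-introduction.
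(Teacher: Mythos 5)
Your proposal is correct and follows essentially the same route as the paper's proof: both directions of each equivalence are obtained by externalising or internalising the quantifiers of $\mathsf{Der}_{I,C}$ and $\mathsf{Conf}_{I,C}$ via the introduction and elimination rules for $\exists$, $\forall$ and $\Rightarrow$, exactly as the paper does for the $\textsf{I-Ind}$/$\mathsf{I}\textsf{-}\mathsf{Ind}'$ case before declaring the rest analogous. Your added remark that existential elimination toward propositions suffices (no choice-like extraction) is accurate and consistent with the paper's setting.
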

\begin{proof}
We show how to prove that $\textsf{I-Ind}$ and $\textsf{I-Ind}'$ are equivalent, the other equivalences being proven analogously.

First, notice that the rule $\textsf{I-Ind}$ explicitly reads
\begin{equation}\label{eq:1}
(\forall x \in A)((\exists y \in I(x))(\forall z \,\varepsilon\, C(x,y))\mathsf{Ind}_{I,C}(z) \Rightarrow \mathsf{Ind}_{I,C}(x)) \; \mathsf{true}   
\end{equation}
First, let us assume that \ref{eq:1} and the premises of $\textsf{I-Ind}'$ hold. To derive the conclusion of $\textsf{I-Ind}'$, apply the rule $\textsf{E-}\forall$ to the universal statement of \ref{eq:1} and on the premise $a \in A$; then apply $\textsf{E-}\Rightarrow$ to the resulting conditional statement and to a proof of the antecedent obtained with the rule $\textsf{I-}\exists$ applied to the premisses $i \in I(a)$ and $\mathsf{true} \in (\forall x \,\varepsilon\, C(a,i))\mathsf{Ind}_{I,C}(x)$.

On the other hand, suppose that the rule $\textsf{I-Ind}'$ holds; it obviously derives the judgment
\[
\mathsf{Ind}_{I,C}(x) \; \mathsf{true} \; [x \in A,y \in I(x), p \in (\forall z \,\varepsilon\, C(x,y))\mathsf{Ind}_{I,C}(z)]
\]
One then proceeds by applying successively the rules $\textsf{E-}\exists$, $\textsf{I-}\Rightarrow$, and $\textsf{I-}\forall$ to obtain \ref{eq:1}.
\end{proof}

\subsection{(Co)Inductive predicates in the intensional level}
We now introduce the intensional counterparts of the rules for inductive and coinductive predicates. They are crucial, on the one hand, to extend the Minimalist Foundation's intensional level and have it interpret extensional (co)induction in its quotient model; on the other hand, to be able to compare our notion of (co)induction with that of other intensional theories.

A rule set is formalised similarly, except for the usual replacement of subsets for functions towards the universe of small propositions.
\paragraph*{Rule set parameters in iMF}
\[
A\; \mathsf{set} \quad I(x) \; \mathsf{set} \; [x \in A] \quad C(x,y) \in A \to \mathsf{Prop_s} \; [x \in A, y \in I(x)] 
\]

The rules are then easily obtained from the extensional ones presented as in Lemma \ref{altex} by adding the now-relevant proof terms. While we can give computational meaning to inductive proofs drawing on the usual pattern of inductive types in Martin-Löf's type theory, the computational meaning of coinduction is still an open problem; therefore, its proof terms will be given axiomatically (which, of course, will block computation). Again, we keep implicit in the premises the rule set parameters.
\paragraph*{Rules for inductive predicates in iMF}
\[
\textsf{F-$\mathsf{Ind}$}\;\frac{a \in A}{\mathsf{Ind}_{I,C}(a) \; \mathsf{prop_s}}
\]
\[
\textsf{I-$\mathsf{Ind}$}\;
\frac
{
a \in A \quad i \in I(a) \quad p \in (\forall x \,\varepsilon\, C(a,i))\mathsf{Ind}_{I,C}(x)
}
{
\mathsf{ind}(a,i,p) \in \mathsf{Ind}_{I,C}(a)
}
\]
\[
\textsf{E-$\mathsf{Ind}$}\;\frac{
\begin{aligned}
& P(x) \; \mathsf{prop} \; [x \in A] \\
& c(x,y,w) \in P(x) \; [x \in A, y \in I(x), w \in (\forall z \,\varepsilon\, C(x,y))P(z)] \\
& a \in A \quad p \in \mathsf{Ind}_{I,C}(a)
\end{aligned}}
{\mathsf{El_{Ind}}(a,p,(x,y,w).c) \in P(a)}
\]
\[
\textsf{C-$\mathsf{Ind}$}\;\frac{
\begin{aligned}
& P(x) \; \mathsf{prop} \; [x \in A] \\
& c(x,y,w) \in P(x) \; [x \in A, y \in I(x), w \in (\forall z \,\varepsilon\, C(a,i))P(z)] \\
& a \in A \quad i \in I(a) \quad p \in (\forall x \,\varepsilon\, C(a,i))\mathsf{Ind}_{I,C}(x)
\end{aligned}}
{\mathsf{El_{Ind}}(a,\mathsf{ind}(a,i,p),(x,y,w).c) = c(a,i,\lambda z.\lambda q.\mathsf{El_{Ind}}(z,p(z,q),(x,y,w).c)) \in P(a)}
\]

\paragraph*{Rules for coinductive predicates in iMF}
\[
\textsf{F-$\mathsf{CoInd}$}\;\frac{a \in A}{\mathsf{CoInd}_{I,C}(a) \; \mathsf{prop_s}}
\]
\[
\textsf{E-$\mathsf{CoInd}$}\;\frac{a \in A \quad i \in I(a) \quad p \in \mathsf{CoInd}_{I,C}(a)}{ \mathsf{des}(a,i,p) \in (\exists x \,\varepsilon\, C(a,i))\mathsf{CoInd}_{I,C}(x)}
\]
\[
\textsf{I-$\mathsf{CoInd}$}\;\frac{
\begin{aligned}
& P(x) \; \mathsf{prop} \; [x \in A] \\
& c(x,y,w) \in (\exists z \,\varepsilon\, C(x,y))P(z) \; [x \in A, y \in I(x), w \in P(x)] \\
& a \in A \quad p \in P(a)
\end{aligned}}{\mathsf{coind}(a,p,(x,y,w).c) \in \mathsf{CoInd}_{I,C}(a)}
\]

\begin{proposition}\label{iimp}
Inductive and coinductive predicates are encodable in the Calculus of Constructions.
\end{proposition}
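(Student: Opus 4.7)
The plan is to adapt the impredicative encoding sketched in Remark \ref{imp} for the calculus of topoi, replacing the classifier $\mathcal{P}(\mathsf{N_1})$ with the Calculus of Constructions' impredicative universe $\mathsf{Prop}$, and using the standard Church-style encodings of the logical connectives. Concretely, the two encodings are
\[
\mathsf{Ind}_{I,C}(a) :\equiv (\forall P : A \to \mathsf{Prop})(\mathsf{Der}_{I,C}(P) \leq_A P \Rightarrow P(a))
\]
\[
\mathsf{CoInd}_{I,C}(a) :\equiv (\exists P : A \to \mathsf{Prop})(P \leq_A \mathsf{Conf}_{I,C}(P) \wedge P(a))
\]
where $\exists$ and $\wedge$ are the impredicative encodings. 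Formation of both constructors is then immediate from the impredicativity of $\mathsf{Prop}$, and I would use the formulations of the rules given by Lemma \ref{altex} as the target of the verification.

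For $\mathsf{Ind}$, the introduction term at $a$, $i$ and $p : (\forall x \,\varepsilon\, C(a,i))\mathsf{Ind}_{I,C}(x)$ is the $\lambda$-abstraction taking an arbitrary $P$ together with a closedness hypothesis $h : \mathsf{Der}_{I,C}(P) \leq_A P$, using $p$ applied componentwise at $P$ and $h$ to produce a proof of $(\forall z \,\varepsilon\, C(a,i)) P(z)$, pairing it with $i$ to obtain a witness of $\mathsf{Der}_{I,C}(P)(a)$, and finally applying $h$ at $a$. For the eliminator, one first re-packages the premise $c$ of $\textsf{E-}\mathsf{Ind}$ into a closedness hypothesis via the impredicative $\exists$-elimination, then instantiates the polymorphic quantifier in the given proof of $\mathsf{Ind}_{I,C}(a)$. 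The computation rule $\textsf{C-}\mathsf{Ind}$ then reduces to $\beta$-equality, including the $\beta$-rule of the Church-style $\exists$.

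For $\mathsf{CoInd}$, the introduction term packages the given $P$, the consistency proof $c$ and $p : P(a)$ as the impredicative existential witness. For the elimination term, given $i \in I(a)$ and a proof of $\mathsf{CoInd}_{I,C}(a)$, unpack the existential to retrieve a consistent predicate $P$ with $P(a)$, apply consistency at $a$ and $i$ to obtain some $x \,\varepsilon\, C(a,i)$ with $P(x)$, and repackage $(P, c, P(x))$ as a witness of $\mathsf{CoInd}_{I,C}(x)$. The one point of care is that the impredicative elimination of the existential must land in a type of $\mathsf{Prop}$, which indeed holds since the conclusion $(\exists x \,\varepsilon\, C(a,i))\mathsf{CoInd}_{I,C}(x)$ lives in $\mathsf{Prop}$; and since no computation rule is prescribed for coinductive predicates in $\mathbf{iMF}$, no further verification is required. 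The main anticipated obstacle is purely notational bookkeeping around the Church-style encodings; conceptually, the argument is a straightforward transcription of the topos-theoretic one from Remark \ref{imp}.
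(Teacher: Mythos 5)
Your proposal is correct and follows essentially the same route as the paper: both define $\mathsf{Ind}_{I,C}$ and $\mathsf{CoInd}_{I,C}$ by the impredicative Church-style encodings over $A \to \mathsf{Prop}$ adapted from Remark \ref{imp}, construct the introduction/elimination terms by instantiating the polymorphic quantifier and (un)packing the impredicative existential exactly as you describe, and check the conversion rule for $\mathsf{Ind}$ by $\beta$-reduction. The only cosmetic difference is that the paper externalises the quantifiers in the closedness hypothesis (as in Lemma \ref{altex}) directly inside the definition of $\mathsf{Ind}_{I,C}(a)$, which slightly streamlines the proof terms.
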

\begin{proof}
The Calculus of Constructions can be regarded as an impredicative version of $\mathbf{iMF}$. Thus, the proof adapts the same idea of Remark \ref{imp}. We only need to make explicit the proof terms and, regarding the inductive predicate constructor, additionally show that the conversion rule is satisfied.
\begin{align*}
\mathsf{Ind}_{I,C}(a)   :\equiv\; & (\forall P \in A \to \mathsf{Prop})
\\
 & ((\forall x \in A)(\forall y \in I(x))((\forall z \,\varepsilon\,C(x,y))P(z)\Rightarrow P(x)) \\
& \Rightarrow P(a))
\\
\mathsf{ind}(a,i,p) :\equiv \;& \lambda P.\lambda c.c(a,i,\lambda x.\lambda q.p(x,q,P,c))
\\
\mathsf{El}^P_{\mathsf{Ind}}(a,p,(x,y,w).c) :\equiv \; & p(\lambda x . P(x),\lambda x.\lambda y.\lambda w.c(x,y,w))
\end{align*}
\begin{align*}
\mathsf{CoInd}_{I,C}(a) :\equiv\; & (\exists P \in A \to \mathsf{Prop})
\\
 & ((\forall x \in A)(P(x) \Rightarrow (\forall y \in I(x))(\exists z \,\varepsilon\,C(x,y))P(z)) \\
& \wedge P(a)) \\
\mathsf{des}(a,i,p) :\equiv\; & \mathsf{El}_\exists(p,(P,q).\mathsf{El}_\exists(c(a,r,i),(z,s).\langle z , \langle \pi_1(s) , \langle P,c,\pi_2(s) \rangle \rangle \rangle))
\\
\text{where }& c :\equiv \pi_1(q) \quad r :\equiv \pi_2(q)\\
\mathsf{coind}^P(a,p,(x,y,w).c) :\equiv\; & \langle \lambda x.P(x), \lambda x.\lambda y.\lambda w.c(x,y,w), p \rangle
\end{align*}
\end{proof}

The extension of the quotient model and the realizability interpretation for the two new constructors will follow from the results already obtained for topological (co)induction once we establish their equivalence in the next section.

\subsection{Topological (co)induction}
The development of the Minimalist Foundation has always been closely tied to that of Formal Topology, and it is no surprise that (co)induction has been first considered in that context. In two successive papers \cite{mmr21, mmr22}, the authors proposed extensions of the Minimalist Foundation supporting the formalisation of (co)inductive generation methods developed in Formal Topology. Moreover, they showed that both the setoid interpretation of the extensional level into the intensional one and the realizability interpretation of the latter can be adapted to support those extensions.

We now recall those extensions, starting with the basic definitions of Formal Topology formalised in the Minimalist Foundation.

\begin{definition}
A \textnormal{basic topology} consists of the following data:
\begin{enumerate}
\item a set $A$, whose elements are called \textnormal{basic opens};
\item a small binary relation $\vartriangleleft$, called \textnormal{basic cover}, between elements of $A$ and subsets of $A$, satisfying the following properties:
\begin{itemize}
\item \textnormal{(reflexivity)} if $a \,\varepsilon\, V$, then $a \vartriangleleft V$;
\item \textnormal{(transitivity)} if $a \vartriangleleft U$ and $(\forall x \,\varepsilon\, U)x \vartriangleleft V$, then $a \vartriangleleft V$.
\end{itemize}
\item a small binary relation $\ltimes$, called \textnormal{positivity relation}, between elements of $A$ and subsets of $A$, satisfying the following properties:
\begin{itemize}
\item \textnormal{(coreflexivity)} if $a \ltimes V$, then $a \,\varepsilon\, V$;
\item \textnormal{(cotransitivity)} if $a \ltimes U$ and  $(\forall x \in A)(x \ltimes V \Rightarrow x \,\varepsilon\, U)$, then $a \ltimes V$;
\item \textnormal{(compatibility)}
if $a \ltimes V$ and $a \vartriangleleft U$, then $(\exists x \,\varepsilon\,V )(x \ltimes U)$.
\end{itemize}
\end{enumerate}
\end{definition}

The intuitive meaning of the above definition is the following: the set $A$, as its name suggests, is a base for the topology; the relation $a \vartriangleleft V$ means that the basic open $a$ is covered by the family of basic opens $V$; and finally, $a \ltimes V$ means that there is a point of $a$ all whose basic neighbourhoods belongs to $V$.

In \cite{CSSV03}, the authors devised a way to inductively generate a basic cover on a given set $A$, starting from an axiom set $(I,C)$ over it. In this sense, an axiom set is the set-indexed family of axioms
\[
a \vartriangleleft C(a,i) \quad \text{for each $a \in A$ and $i \in I(a)$}
\]
The inductively generated basic cover is then the smallest one which satisfies them. Later, in \cite{somepoints}, this idea was dualised to coinductively generate the greatest positivity relation satisfying
\[
a \ltimes C(a,i) \quad \text{for each $a \in A$ and $i \in I(a)$}
\]
which, moreover, turns out to be compatible with the inductively generated basic cover on the same axiom set, and thus giving rise to a basic topology.

\begin{example}
Consider the set $\mathsf{List}(\mathbb{N})$, and the following axiom set over it.
\begin{align*}
I(s) & :\equiv \mathsf{N_1} + (\Sigma l \in \mathsf{List}(\mathbb{N}))(\exists t \in \mathsf{List}(\mathbb{N}))[l,t] =_{\mathsf{List}(\mathbb{N})} s \\
C(s,\mathsf{inl}(\star)) & :\equiv \{ \mathsf{cons}(s,n)  \,|\, n \in \mathbb{N} \} \\
C(s,\mathsf{inr}(z)) & :\equiv \{  \pi_1(z) \} 
\end{align*}
where $[-,-]$ is the concatenation operator.
The basic topology obtained by (co)induction from the above axiom set is the \textit{Baire space topology}. In particular, in this situation, the positivity relation $a \ltimes_{I,C} V$ precisely states that there exists a \textit{spread} containing $a$ and contained in $V$ \cite{ciraulo}.
\end{example}

When the positivity relation $a \ltimes V$ is specialised to the case where $V = A$, one talks of the \textit{positivity predicate} $\mathsf{Pos}(a) :\equiv a \ltimes A$; sometimes, coinductively generated positivity predicates have been considered on their own, as in \cite{coexp}, where the authors used them to constructively and predicatively prove the coreflection of locales in open locales.

These methods were formalised in the Minimalist Foundation in the form of an inductive constructor $\vartriangleleft$ in \cite{mmr21} and a coinductive constructor $\ltimes$ in \cite{mmr22}, having both as parameters an axiom set $(I,C)$ over a set $A$ (formalised as in the case of (co)inductive predicates), and a subset $V \in \mathcal{P}(A)$ in the extensional level, or a propositional function $V \in A \to \mathsf{Prop_s}$ in the intensional one. We first recall their rules in the extensional level. Again, we leave implicit in the premises the parameters.

\paragraph*{Rules for inductive basic covers in eMF}
\[
\textsf{F-$\vartriangleleft$}\;\frac{a \in A}{a \vartriangleleft_{I,C} V\; \mathsf{prop_s}}
\]
\[
\textsf{I\textsubscript{rf}-$\vartriangleleft$}\;\frac{a \,\varepsilon\, V\; \mathsf{true}}{a \vartriangleleft_{I,C} V\; \mathsf{true}}
\]
\[
\textsf{I\textsubscript{tr}-$\vartriangleleft$}\;\frac{a \in A \quad i \in I(a) \quad (\forall x \,\varepsilon\, C(a,i))x \vartriangleleft_{I,C} V \; \mathsf{true}}{a \vartriangleleft_{I,C} V\; \mathsf{true}}
\]
\[
\textsf{E-$\vartriangleleft$}\;\frac
{
\begin{aligned}
& P(x) \; \mathsf{prop} \; [x \in A] \\
& (\forall x \in A)(x \,\varepsilon\, V \vee (\exists y \in I(x))(\forall z \,\varepsilon\, C(x,y))P(z) \Rightarrow P(x)) \; \mathsf{true}    \\
& a \in A \quad a \vartriangleleft_{I,C} V\; \mathsf{true} 
\end{aligned}
}
{P(a) \; \mathsf{true}}
\]

\paragraph*{Rules for coinductive positivity relations in eMF}
\[
\textsf{F-$\ltimes$}\;\frac{a \in A}{a \ltimes_{I,C} V\; \mathsf{prop_s}}
\]
\[
\textsf{E\textsubscript{corf}-$\ltimes$}\;\frac{a \ltimes_{I,C} V\; \mathsf{true}}{a \,\varepsilon\, V\; \mathsf{true}}
\]
\[
\textsf{E\textsubscript{cotr}-$\ltimes$}\;\frac{a \in A \quad i \in I(a) \quad a \ltimes_{I,C} V \; \mathsf{true}}{(\exists x \,\varepsilon\, C(a,i))x \ltimes_{I,C} V\; \mathsf{true}}
\]
\[
\textsf{I-$\ltimes$}\;\frac
{
\begin{aligned}
& P(x) \; \mathsf{prop} \; [x \in A] \\
& (\forall x \in A)(P(x) \Rightarrow x \,\varepsilon\, V \wedge (\forall y \in I(x))(\exists z \,\varepsilon\, C(x,y))P(z)) \; \mathsf{true} \\
& a \in A \quad P(a) \; \mathsf{true} 
\end{aligned}
}
{a \ltimes_{I,C} V \; \mathsf{true}}
\]

The above rules are very close to that of (co)inductive predicates presented as in Lemma \ref{altex}, the only difference being the presence of the parameter $V$, which implies, for each of the two constructors, an additional (co)reflection rule and an additional condition on the predicates involved in the universal properties. To consider the clause induced by the parameter $V$, we modify the derivability and confutability endomorphisms in the following way.
\begin{align*}
\mathsf{Der}_{I,C,V}(P)(x) & :\equiv x \,\varepsilon\,V \vee \mathsf{Der}_{I,C}(P)(x) \; \mathsf{prop} \; [x \in A] \\
\mathsf{Conf}_{I,C,V}(P)(x) & :\equiv x \,\varepsilon\,V \wedge \mathsf{Conf}_{I,C}(P)(x) \; \mathsf{prop} \; [x \in A] 
\end{align*}
The next lemma shows that indeed the predicates $- \vartriangleleft_{I,C} V$ and $- \ltimes_{I,C} V$ can be seen as the smallest closed predicate of $\mathsf{Der}_{I,C,V}$ and the greatest consistent predicate of $\mathsf{Conf}_{I,C,V}$, respectively.
\begin{lemma}\label{altin}
The introduction and elimination rules for extensional inductive basic covers in $\mathbf{eMF}$ can equivalently be formulated in the following way.
\[
\mathsf{I}\textsf{-$\vartriangleleft$}'\;\frac{}{
\mathsf{Der}_{I,C,V}(- \vartriangleleft_{I,C} V) \leq_A - \vartriangleleft_{I,C} V \; \mathsf{true}}
\]
\[
\mathsf{E}\textsf{-$\vartriangleleft$}'\frac{P(x) \; \mathsf{prop} \; [x \in A]\quad \mathsf{Der}_{I,C,V}(P) \leq_A P \; \mathsf{true}}{- \vartriangleleft_{I,C} V \leq_A P \; \mathsf{true}}
\]
Analogously for extensional coinductive positivity relations.
\[
\mathsf{E}\textsf{-$\ltimes$}'\;\frac{}{
- \ltimes_{I,C} V \leq_A \mathsf{Conf}_{I,C,V}(- \ltimes_{I,C} V) \; \mathsf{true}}
\]
\[
\mathsf{I}\textsf{-$\ltimes$}'\;\frac{P(x) \; \mathsf{prop} \; [x \in A]\quad P \leq_A \mathsf{Conf}_{I,C,V}(P) \; \mathsf{true}}{ P \leq_A - \ltimes_{I,C} V \; \mathsf{true}}
\]
\end{lemma}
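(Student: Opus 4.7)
The plan is to mimic the strategy of Lemma \ref{altex}, treating the parameter $V$ as absorbed into the modified endomorphisms $\mathsf{Der}_{I,C,V}$ and $\mathsf{Conf}_{I,C,V}$. For the inductive half I would first unfold $\mathsf{Der}_{I,C,V}(P)(x) \equiv x \,\varepsilon\, V \vee \mathsf{Der}_{I,C}(P)(x)$ so that $\mathsf{I}\textsf{-}\vartriangleleft'$ reads explicitly
\[
(\forall x \in A)\bigl(x \,\varepsilon\, V \vee (\exists y \in I(x))(\forall z \,\varepsilon\, C(x,y))\, z \vartriangleleft_{I,C} V \;\Rightarrow\; x \vartriangleleft_{I,C} V\bigr) \; \mathsf{true}.
\]
Since an implication out of a disjunction is equivalent to the pair of implications out of its disjuncts, this single statement splits cleanly into $\mathsf{I}_{\mathsf{rf}}\textsf{-}\vartriangleleft$ (the left disjunct) and, after the same $\exists$/$\forall$ peeling carried out in the proof of Lemma \ref{altex}, into $\mathsf{I}_{\mathsf{tr}}\textsf{-}\vartriangleleft$ (the right disjunct). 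Conversely, assuming both original introduction rules, any proof of the disjunction is handled by case analysis: the left case triggers reflexivity, the right one triggers transitivity.

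For the elimination rule the translation is essentially immediate: the clause $x \,\varepsilon\, V \vee (\exists y \in I(x))(\forall z \,\varepsilon\, C(x,y))P(z)$ appearing in $\mathsf{E}\textsf{-}\vartriangleleft$ is by definition $\mathsf{Der}_{I,C,V}(P)(x)$, so the hypothesis of $\mathsf{E}\textsf{-}\vartriangleleft$ coincides with $\mathsf{Der}_{I,C,V}(P) \leq_A P$, and its conclusion $P(a) \; \mathsf{true}$ under $a \vartriangleleft_{I,C} V \; \mathsf{true}$ is exactly $- \vartriangleleft_{I,C} V \leq_A P$ once one generalises over $a \in A$ with $\textsf{I-}\forall$ and $\textsf{I-}\Rightarrow$.

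The coinductive half is dual. I would unfold $\mathsf{Conf}_{I,C,V}(P)(x) \equiv x \,\varepsilon\, V \wedge \mathsf{Conf}_{I,C}(P)(x)$ in $\mathsf{E}\textsf{-}\ltimes'$ and use the fact that an implication into a conjunction is equivalent to the conjunction of two implications to split it into $\mathsf{E}_{\mathsf{corf}}\textsf{-}\ltimes$ (the first conjunct) and, after peeling the $\forall y$ and $\exists z$, into $\mathsf{E}_{\mathsf{cotr}}\textsf{-}\ltimes$ (the second conjunct). Conversely, pairing the two original elimination rules reconstructs the packed form. The introduction rule $\mathsf{I}\textsf{-}\ltimes$ already incorporates the conjunction $x \,\varepsilon\, V \wedge (\forall y \in I(x))(\exists z \,\varepsilon\, C(x,y))P(z)$, so it is literally $\mathsf{I}\textsf{-}\ltimes'$ after the standard quantifier rearrangement.

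The content is purely propositional bookkeeping via the distributive laws
\[
(\varphi \vee \psi \Rightarrow \chi) \Leftrightarrow (\varphi \Rightarrow \chi) \wedge (\psi \Rightarrow \chi), \qquad (\chi \Rightarrow \varphi \wedge \psi) \Leftrightarrow (\chi \Rightarrow \varphi) \wedge (\chi \Rightarrow \psi),
\]
combined with the same $\forall$/$\exists$/$\Rightarrow$ manipulations exploited in Lemma \ref{altex}. There is no genuine obstacle; the only point requiring mild care is tracking where the universal quantifier over $x \in A$ sits when moving between the rule-form and the $\leq_A$-form of each statement, but this is exactly the bookkeeping already done in the previous lemma and can be imported verbatim.
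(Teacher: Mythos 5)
Your proposal is correct and matches the paper's proof, which simply states that the argument is entirely analogous to that of Lemma \ref{altex}; the quantifier peeling you describe is exactly that argument, and the only genuinely new content — splitting the $\vee$ in $\mathsf{Der}_{I,C,V}$ into the $\mathsf{I}_{\mathsf{rf}}$/$\mathsf{I}_{\mathsf{tr}}$ pair and the $\wedge$ in $\mathsf{Conf}_{I,C,V}$ into the $\mathsf{E}_{\mathsf{corf}}$/$\mathsf{E}_{\mathsf{cotr}}$ pair via the intuitionistically valid distributive laws — is handled correctly. No gaps.
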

\begin{proof}
The proof is entirely analogous to that of Lemma \ref{altex}.
\end{proof}

The intensional rules are designed similarly. Again, with inductive basic cover enjoying a computational behaviour, while coinductive positivity relation given axiomatically.

\paragraph*{Rules for inductive basic covers in iMF}
\[
\textsf{F-$\vartriangleleft$}\;\frac{a \in A}{a \vartriangleleft_{I,C}V \; \mathsf{prop_s}}
\]
\[
\textsf{I\textsubscript{rf}-$\vartriangleleft$}\;
\frac
{
a \in A \quad r \in a\,\varepsilon\,V
}
{
\mathsf{rf}(a,r) \in a \vartriangleleft_{I,C}V
}
\]
\[
\textsf{I\textsubscript{tr}-$\vartriangleleft$}\;
\frac
{
a \in A \quad i \in I(a) \quad p \in (\forall x \,\varepsilon\, C(a,i))x \vartriangleleft_{I,C}V
}
{
\mathsf{tr}(a,i,p) \in a \vartriangleleft_{I,C}V
}
\]
\[
\textsf{E-$\vartriangleleft$}\;\frac{
\begin{aligned}
& P(x) \; \mathsf{prop} \; [x \in A] \\
& q_1(x,y) \in P(x) \; [x \in A, y \in x \,\varepsilon\,V] \\
& q_2(x,y,w) \in P(x) \; [x \in A, y \in I(x), w \in (\forall z \,\varepsilon\, C(x,y))P(z)] \\
& a \in A \quad p \in a \vartriangleleft_{I,C}V
\end{aligned}}
{\mathsf{El}_\vartriangleleft(a,p,(x,y).q_1,(x,y,w).q_2) \in P(a)}
\]
\[
\textsf{C\textsubscript{rf}-$\vartriangleleft$}\;\frac{
\begin{aligned}
& P(x) \; \mathsf{prop} \; [x \in A] \\
& q_1(x,y) \in P(x) \; [x \in A, y \in x \,\varepsilon\,V] \\
& q_2(x,y,w) \in P(x) \; [x \in A, y \in I(x), w \in (\forall z \,\varepsilon\, C(x,y))P(z)] \\
& a \in A \quad r \in a \,\varepsilon\,V
\end{aligned}}
{\mathsf{El}_\vartriangleleft(a,\mathsf{rf}(a,r),(x,y).q_1,(x,y,w).q_2) = q_1(a,r) \in P(a)}
\]
\[
\textsf{C\textsubscript{tr}-$\vartriangleleft$}\;\frac{
\begin{aligned}
& P(x) \; \mathsf{prop} \; [x \in A] \\
& q_1(x,y) \in P(x) \; [x \in A, y \in x \,\varepsilon\,V] \\
& q_2(x,y,w) \in P(x) \; [x \in A, y \in I(x), w \in (\forall z \,\varepsilon\, C(x,y))P(z)] \\
& a \in A \quad i \in I(a) \quad p \in (\forall x \,\varepsilon\, C(a,i))x \vartriangleleft_{I,C}V
\end{aligned}}
{\mathsf{El}_\vartriangleleft(a,\mathsf{tr}(a,i,p),q_1,q_2) = q_2(a,i,\lambda z.\lambda w. \mathsf{El}_\vartriangleleft(z,p(z,w),q_1,q_2) \in P(a)}
\]

\paragraph*{Rules for coinductive positivity relation in iMF}
\[
\textsf{F-$\ltimes$}\;\frac{a \in A}{a \ltimes_{I,C} V \; \mathsf{prop_s}}
\]
\[
\textsf{E\textsubscript{corf}-$\ltimes$}\;\frac{
a \in A \quad p \in a \ltimes_{I,C} V
}{\mathsf{corf}(a,p) \in a \,\varepsilon\, V}
\]
\[
\textsf{E\textsubscript{cotr}-$\ltimes$}\;\frac{
a \in A \quad i \in I(a) \quad p \in a \ltimes_{I,C} V}{\mathsf{cotr}(a,i,p) \in (\exists x \,\varepsilon\, C(a,i))x \ltimes_{I,C} V}\]
\[
\textsf{I-$\ltimes$}\;\frac{\begin{aligned}
& P(x) \; \mathsf{prop} \; [x\in A]\\
& q_1(x,y) \in x \,\varepsilon\, V \; [x \in A,y \in P(x)]\\
& q_2(x,y,w) \in (\exists z \,\varepsilon\, C(x,y))P(z) \; [x \in A,y \in I(x),w\in P(x)]\\
& a \in A \quad p \in P(a)
\end{aligned}}
{\mathsf{coind}(a,p,q_1,q_2) \in a \ltimes_{I,C} V}
\]

The following result shows that the two flavours of (co)induction considered so far are equivalent.
\begin{theorem}
In both levels of the Minimalist Foundation, inductive basic covers and inductive predicates are mutually encodable, and so are coinductive positivity relations and coinductive predicates. In particular, coinductive predicates coincide with positivity predicates.
\end{theorem}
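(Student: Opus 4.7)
The plan is to exhibit four explicit encodings -- two for the inductive side and two for the coinductive side -- and reduce mutual encodability to a direct comparison of monotone operators on predicates. Throughout I will rely on Lemmas \ref{altex} and \ref{altin}, which recast inductive basic covers and coinductive positivity relations as the smallest closed (respectively, greatest consistent) predicates for the operators $\mathsf{Der}_{I,C,V}$ and $\mathsf{Conf}_{I,C,V}$; this matches the shape of the rules for $\mathsf{Ind}$ and $\mathsf{CoInd}$ exactly, the only difference being the extra parameter $V$.

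For the inductive side I encode $\mathsf{Ind}$ via $\vartriangleleft$ by setting $\mathsf{Ind}_{I,C}(a) :\equiv a \vartriangleleft_{I,C} \emptyset$, since with $V = \emptyset$ the reflexivity clause is never usable and $\mathsf{Der}_{I,C,\emptyset}$ agrees with $\mathsf{Der}_{I,C}$. Conversely, to encode $\vartriangleleft$ as an inductive predicate I absorb the reflexivity into the axiom set: take $I'(x) :\equiv I(x) + (x \,\varepsilon\, V)$, $C'(x,\mathsf{inl}(i)) :\equiv C(x,i)$, $C'(x,\mathsf{inr}(p)) :\equiv \emptyset$, and define $a \vartriangleleft_{I,C} V :\equiv \mathsf{Ind}_{I',C'}(a)$. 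A routine unfolding yields $\mathsf{Der}_{I',C'}(P)(x) \Leftrightarrow x \,\varepsilon\, V \vee \mathsf{Der}_{I,C}(P)(x)$, i.e.\ $\mathsf{Der}_{I,C,V}(P)(x)$, so the two least fixed points agree.

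For the coinductive side, the ``in particular'' claim drops out of the encoding of $\mathsf{CoInd}$ via $\ltimes$: specialising $V := A$ makes the coreflexivity trivial and $\mathsf{Conf}_{I,C,A}$ collapses to $\mathsf{Conf}_{I,C}$, so setting $\mathsf{CoInd}_{I,C}(a) :\equiv a \ltimes_{I,C} A \equiv \mathsf{Pos}(a)$ works directly. The opposite direction is the subtler one. I will take $I'(x) :\equiv I(x) + \mathsf{N_1}$, $C'(x,\mathsf{inl}(i)) :\equiv C(x,i)$, $C'(x,\mathsf{inr}(\star)) :\equiv \{y \in A \mid y =_A x \wedge x \,\varepsilon\, V\}$, and define $a \ltimes_{I,C} V :\equiv \mathsf{CoInd}_{I',C'}(a)$. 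Then $\mathsf{Conf}_{I',C'}(P)(x) \Leftrightarrow \mathsf{Conf}_{I,C}(P)(x) \wedge P(x) \wedge x \,\varepsilon\, V$, and on any consistent $P$ the extra $P(x)$ conjunct is vacuous, so the condition reduces precisely to $\mathsf{Conf}_{I,C,V}(P)(x)$ and the greatest consistent predicates coincide.

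With these four encodings in place, the rules in $\mathbf{eMF}$ are derived from each other by routine predicate-logic manipulations using Lemmas \ref{altex} and \ref{altin}. In $\mathbf{iMF}$ the same encodings are repeated with explicit proof terms: the constructors $\mathsf{rf}$ and $\mathsf{tr}$ (respectively $\mathsf{corf}$ and $\mathsf{cotr}$) correspond to the two summands of $I'$, with the empty-premise case for $\mathsf{rf}$ dispatched by $\mathsf{N_0}$-elimination, and the diagonal premise $\{y \in A \mid y =_A x \wedge x \,\varepsilon\, V\}$ simultaneously witnesses $a \,\varepsilon\, V$ for $\mathsf{corf}$ while the remaining content is discarded by the existential eliminator. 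The inductive computation rules are then checked by unfolding the target side's C-rule; on the coinductive side proof terms are axiomatic, so only typing must be verified. The main obstacle I anticipate is precisely this last encoding: the parameter $V$ does not admit the clean ``add an index with empty premises'' trick available in the inductive case, and the diagonal singleton premise is what forces $x \,\varepsilon\, V$ while avoiding spurious dependencies on $P$.
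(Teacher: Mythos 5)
Your four encodings are correct, and three of them (taking $V:\equiv\emptyset$ for $\mathsf{Ind}$, $V:\equiv A$ for $\mathsf{CoInd}$ — which also gives the ``in particular'' clause — and absorbing reflexivity into an enlarged rule set with empty premises for $\vartriangleleft$) coincide with the paper's proof up to the order of the summands in $I'$. The one place where you genuinely diverge is the encoding of $\ltimes$ by $\mathsf{CoInd}$. The paper does \emph{not} add a new rule index; it instead restricts the carrier by comprehension, setting $A^V:\equiv(\Sigma w\in A)\,w\,\varepsilon\,V$, $I^V(x):\equiv I(\pi_1(x))$, $C^V(x,y):\equiv\{z\in A^V\mid \pi_1(z)\,\varepsilon\,C(\pi_1(x),y)\}$, and defines $x\ltimes_{I,C}V:\equiv(\exists p\in x\,\varepsilon\,V)\mathsf{CoInd}_{I^V,C^V}(\langle x,p\rangle)$, so that coreflexivity is enforced by the mere wellformedness of the carrier rather than by an extra rule. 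Your alternative — $I'(x):\equiv I(x)+\mathsf{N_1}$ with the guarded diagonal premise $C'(x,\mathsf{inr}(\star)):\equiv\{y\in A\mid y=_A x\wedge x\,\varepsilon\,V\}$ — does work: your computation $\mathsf{Conf}_{I',C'}(P)(x)\Leftrightarrow\mathsf{Conf}_{I,C}(P)(x)\wedge x\,\varepsilon\,V\wedge P(x)$ is right, the extra conjunct $P(x)$ is indeed vacuous on the consistency condition $P\leq\mathsf{Conf}_{I',C'}(P)$, so the classes of consistent predicates for $\mathsf{Conf}_{I',C'}$ and $\mathsf{Conf}_{I,C,V}$ agree and the greatest ones coincide. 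What each approach buys: yours is pleasantly dual to the inductive encoding (both modify only $I$ and $C$, leaving $A$ fixed), at the price of invoking the equality predicate and, in $\mathbf{iMF}$, a transport along an intensional identity proof when typing $\mathsf{corf}$ and the witness for consistency; the paper's version never mentions equality and so goes through verbatim at the intensional level, at the price of changing the base set and wrapping the final definition in an existential over proofs of $x\,\varepsilon\,V$. Since the coinductive constructors carry no computation rules, only typing must be checked in either case, as you correctly observe.
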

\begin{proof}
We first prove it for the extensional level. To see that topological (co)induction encodes (co)inductive predicates, it is enough to choose $V$ to be nilpotent; indeed, notice that, for each axiom set $(I,C)$ over $A$, the operator $\mathsf{Der}_{I,C}$ (resp. $\mathsf{Conf}_{I,C}$) is equivalent to the operator $\mathsf{Der}_{I,C,\emptyset}$ (resp. $\mathsf{Conf}_{I,C,A}$). It is trivial to check that the following interpretations satisfy the rules of (co)inductive predicates.
\begin{align*}
\mathsf{Ind}_{I,C}(x) & :\equiv x \vartriangleleft_{I,C} \emptyset \\
\mathsf{CoInd}_{I,C}(x) & :\equiv \mathsf{Pos}(x) \equiv x \ltimes_{I,C} A
\end{align*}

On the other hand, assume to have an axiom set $(I,C)$ over $A$, and a subset $V \in \mathcal{P}(A)$; to prove that inductive predicates encode basic inductive covers we define an enlarged rule set by encoding the additional reflexivity clause given by $V$. 
\begin{align*}
I_V(x) & :\equiv x \,\varepsilon\, V + I(x) \\
C_V(x,\mathsf{inl}(p)) & :\equiv \emptyset \\
C_V(x,\mathsf{inr}(y)) & :\equiv C(x,y)
\end{align*}
where, formally, we set $C_V(x,y,z)  :\equiv \mathsf{T}(\mathsf{El_+}(y,(w).[\bot],(w).[C(x,w,z)]))$. It is easy to check that the operator $\mathsf{Der}_{I,C,V}$ is equivalent to the operator $\mathsf{Der}_{I_V,C_V}$, thus making the following interpretation satisfy the rules of inductive basic covers.
\[
x \vartriangleleft_{I,C} V :\equiv\mathsf{Ind}_{I_V,C_V}(x)
\]
To prove that coinductive predicates encode coinductive positivity relations, we define a restriction of both the set $A$ and the axiom set $(I,C)$ by comprehension on those elements for which $V$ already holds.
\begin{align*}
A^V & :\equiv (\Sigma w \in A)w\,\varepsilon\, V \\
I^V(x) & :\equiv I(\pi_1(x)) \\
C^V(x,y) & :\equiv \{ z \in A^V \,|\, \pi_1(z) \,\varepsilon\, C(\pi_1(x),y) \}
\end{align*}
It is then easy to check that the following interpretation satisfies the rules of coinductive positivity relations as presented in Lemma \ref{altin}.
\[
x \ltimes_{I,C} V :\equiv (\exists p \in x \,\varepsilon\, V)\mathsf{CoInd}_{I^V,C^V}(\langle x , p \rangle)
\]
To adapt the above proofs for the intensional level, it is enough to explicitly interpret the proof terms and, in the inductive case, to check that the computation rules are satisfied. We only report the explicit interpretation of proof terms of inductive basic covers encoded by inductive predicates.
\begin{align*}
\mathsf{rf}(a,r) & :\equiv \mathsf{ind}(a,\mathsf{inl}(r),\lambda x.\lambda y.\mathsf{El}_\bot(y)) \\
\mathsf{tr}(a,i,p) & :\equiv \mathsf{ind}(a,\mathsf{inr}(i),p) \\
\mathsf{El}_\vartriangleleft(a,p,q_1,q_2) & :\equiv \mathsf{El_{Ind}}(a,p,(x,y,w).\mathsf{Ap}_\Rightarrow(f(x,y),w))
\end{align*}
where $f(x,y) :\equiv \mathsf{El}_+(y,(u).\lambda w.q_1(x,u),(u).\lambda w.q_2(x,u,w))$.
\end{proof}

As a corollary, we obtain that both the quotient model and the realizability interpretation of the Minimalist Foundation extend to (co)inductive predicates. We keep calling $\mathbf{MF}_{\mathsf{ind}}$, as in \cite{mmr21}, the (two-level) theory obtained by extending the Minimalist Foundation with inductive predicates and $\mathbf{MF}_{\mathsf{cind}}$, as in \cite{mmr22}, that obtained by extension with inductive and coinductive predicates.

\section{(Co)Induction in Martin-Löf's type theory}\label{hottsec}
Induction and coinduction in Martin-Löf's type theory and its extensions can assume many forms; two of the most common schemes are given by a pair of dual constructions called $\mathsf{W}$-types (or wellfounded trees) for induction and $\mathsf{M}$-types (or non-wellfounded trees) for coinduction. Here, we examine their relationship with the forms of (co)induction introduced in the previous section.

\subsection{Induction in Martin-Löf's type theory}
In Martin-Löf's type theory, inductive sets are usually considered according to the general scheme of \textit{$\mathsf{W}$-types}, also known as \textit{wellfounded trees}.
The parameters of a $\mathsf{W}$-type consist of a set $A$ and an $A$-indexed family of sets $B$, which together are often referred to as a \textit{container} \cite{indexed}; the resulting type $\mathsf{W}_{A,B}$ is intuitively understood as the set of wellfounded trees with nodes labelled by elements of $A$ and with a (possibly infinitary) branching function given by $B$. The precise rules of the constructor are reported below.
\paragraph*{Rules for $\mathsf{W}$-types in $\mathbf{MLTT_0}$}
\label{wMLTT}
\[
\textsf{ F-}\textsf{W}\;\frac{}{\mathsf{W}_{A,B} \in \mathsf{U_0}}
\]
\[
\textsf{ I-}\textsf{W}\;\frac
{a \in A \qquad f \in B(a) \to \mathsf{W}_{A,B}}
{\mathsf{sup}(a,f) \in \mathsf{W}_{A,B}}
\]
\[
\textsf{ E-}\textsf{W}\;
\frac{
\begin{aligned}
& M(w) \; \mathsf{type} \; [w \in \mathsf{W}_{A,B}]
\\
& d(x,h,k) \in M(\mathsf{sup}(x,h)) \; [x \in A \,, h \in B(x) \to \mathsf{W}_{A,B} \,, k \in (\Pi y \in B(x))M(h(y))]
\\
& t \in \mathsf{W}_{A,B}
\end{aligned}}{\mathsf{El_W}(t,(x,h,k).d) \in M(t)}
\]
\[
\textsf{ C-}\textsf{W}\;
\frac{
\begin{aligned}
& M(w) \; \mathsf{type} \; [w \in \mathsf{W}_{A,B}]
\\
& d(x,h,k) \in M(\mathsf{sup}(a,f)) \; [x \in A \,, h \in B(a) \to \mathsf{W}_{A,B} \,, k \in (\Pi y \in B(x))M(h(y))]
\\
& a \in A \quad f \in B(a) \to \mathsf{W}_{A,B}
\end{aligned}}
{\mathsf{El_W}(\mathsf{sup}(a,f),(x,h,k).d) = d(a,f,\lambda y.{\mathsf{El_W}(f(y),(x,h,k).d)) \in M(\mathsf{sup}(a,f))}}
\]

As far as we are concerned, however, $\mathsf{W}$-types do not seem to have the same expressive power of inductive predicates. This is because they produce just plain sets, while predicates, under the propositions-as-types paradigm, are families of sets. This same limitation has been addressed in \cite{Petersson}, where the authors proposed a generalization of $\mathsf{W}$-types, called \textit{dependent $\mathsf{W}$-types}, capable of constructing \textit{families of mutually} inductive sets; in the literature they are also known as \textit{general trees}, or \textit{indexed $\mathsf{W}$-types}. Dependent $\mathsf{W}$-types are interpreted again as sets of wellfounded trees with labelled nodes; however, each label now has a set of possible options for the branching function; moreover, each branching function not only indicates the number of subtrees but also dictates how each of their roots is to be labelled. The formal rendering of this intuition in Martin-Löf's type theory goes as follows. The parameters of a dependent $\mathsf{W}$-type, which, analogously to the non-dependent case, are referred to as \textit{indexed container}, consist of a small type $A \in \mathsf{U_0}$ of nodes' labels and a family of sets $I \in A \to  \mathsf{U_0}$ indexing the possible branching functions associated with each label. To formalise the branching functions, there are two possibilities in the literature.
Either with a function
\[
C(x,y) \in A \to \mathsf{U_0} \; [x \in A,y\in I(x)]
\]
that for each label says how many subtrees there are with roots labelled by it; or with two ariety functions
\begin{align*}
Br(x,y) \in \mathsf{U_0} & \; [x \in A,y\in I(x)] \\
ar(x,y) \in Br(x,y) \to A & \; [x \in A,y\in I(x)]
\end{align*}
that say how many subtrees there are in general and the label of each subtree's root, respectively. It is clear how, in the first case, indexed containers correspond precisely to the notion of rule set formulated in Martin-Löf's type theory. In either case, the type constructors are then formalised by adapting the pattern of $\mathsf{W}$-types to account for the extra indexing. In particular, using the first formulation, one obtains a proof-relevant version of inductive predicates in Martin-Löf's type theory; as always, their elimination rules differ from the ones in the Minimalist Foundation since now they can work towards sets depending on the constructor. For this reason, in the following discussion, we will refer to dependent $\mathsf{W}$-types defined with the parameter $C$ as \textit{(proof-relevant) inductive predicates}, reserving the name \textit{dependent $\mathsf{W}$-types} to just those defined using the parameters $Br$ and $ar$. Their precise rules are spelt out in the appendix (see \ref{dwMLTT} and \ref{indMLTT}). 
Finally, once more tracing on the pattern of dependent $\mathsf{W}$-types, in \cite{mmr21} the authors primitively introduced in Martin-Löf's type theory also a constructor $\vartriangleleft$ formalising proof-relevant inductive basic covers, whose rules are again recalled in the appendix \ref{indMLTT}.

By the results of \cite{topcount}, we know that all the inductive constructors presented so far can be reduced to $\mathsf{W}$-types.

\begin{theorem}\label{allind}
In $\mathbf{MLTT_0}$, the following type constructors are mutually encodable.
\begin{enumerate}
\item $\mathsf{W}$-types;
\item dependent $\mathsf{W}$-types;
\item inductive predicates;
\item inductive basic covers.
\end{enumerate}
\end{theorem}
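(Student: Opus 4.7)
The plan is to close a cycle of encodings. Three of the four directions are essentially inclusions of special cases. First, a $\mathsf{W}$-type $\mathsf{W}_{A,B}$ is recovered as the dependent $\mathsf{W}$-type whose label type is $A$, whose index family is the singleton $I(x) :\equiv \mathsf{N_1}$, whose branching arity is $Br(x,\star) :\equiv B(x)$, and whose arity function is constantly a fixed label. Second, a dependent $\mathsf{W}$-type with parameters $(Br, ar)$ is encoded as an inductive predicate over $A$ with premise family $C(x,y) :\equiv \lambda z.(\Sigma b \in Br(x,y))\,\mathsf{Id}(ar(x,y,b),z)$; conversely, the $C$-formulation yields a dependent $\mathsf{W}$-type by setting $Br(x,y) :\equiv (\Sigma z \in A)\, z \,\varepsilon\, C(x,y)$ with $ar :\equiv \pi_1$. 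Third, an inductive predicate $\mathsf{Ind}_{I,C}$ is recovered as the inductive basic cover $\vartriangleleft_{I,C}\emptyset$, exactly as in the proof of the previous theorem in the extensional level.

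The substantive direction — encoding inductive basic covers back into plain $\mathsf{W}$-types — was already established in our previous work with M.E. Maietti. The reduction proceeds in two steps analogous to those of the previous theorem. First, the reflexivity clause of $a \vartriangleleft_{I,C} V$ is absorbed into an enlarged rule set $(I_V, C_V)$, thereby reducing basic covers to inductive predicates. Second, each proof of an inductive predicate over $(I,C)$ is encoded as a wellfounded tree whose nodes package a pair $\langle a, i \rangle$ with $a \in A$ and $i \in I(a)$, and whose branching function is indexed by $(\Sigma z \in A)\, z \,\varepsilon\, C(a,i)$. What makes this encoding available in $\mathbf{MLTT_0}$ is the fact that $\mathsf{W}$-types may eliminate toward arbitrary (small or large) types depending on the constructor, which is exactly what is needed to recover the elimination rule of proof-relevant inductive predicates.

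The main technical obstacle is the book-keeping needed to verify that the $\beta$-computation rules of each constructor survive the interpretations. This requires systematic use of the $\eta$-equalities for $\Sigma$ and $\Pi$ assumed in $\mathbf{MLTT_0}$, which let us identify the pairs and $\lambda$-abstractions produced by destructuring and repackaging the indexing data along the chain of encodings. Once these computations check out, composing the four encodings closes the cycle $\mathsf{W} \hookrightarrow \text{dep-}\mathsf{W} \hookrightarrow \mathsf{Ind} \hookrightarrow {\vartriangleleft} \to \mathsf{W}$ and yields the claimed mutual encodability between any two of the constructors.
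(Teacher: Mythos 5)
Your overall strategy --- closing a directed cycle of encodings through the four constructors --- matches the cited proof, but you orient the cycle in the opposite direction, and this is where the problems arise. First, your encoding of $\mathsf{W}_{A,B}$ as a dependent $\mathsf{W}$-type is wrong as stated: with label type $A$, $I(x) :\equiv \mathsf{N_1}$, $Br(x,\star) :\equiv B(x)$ and a constant arity function, every non-root node of a tree in $\mathsf{DW}_{Br,ar}(a)$ is forced to carry the fixed label, so the resulting family is not $\mathsf{W}_{A,B}$ (which allows arbitrary labels on subtrees). The correct instance swaps the roles of $A$ and $\mathsf{N_1}$: index the dependent $\mathsf{W}$-type over $\mathsf{N_1}$ with $I(\star) :\equiv A$, $Br(\star,a) :\equiv B(a)$, $ar(\star,a,b) :\equiv \star$, and set $\mathsf{W}_{A,B} :\equiv \mathsf{DW}_{Br,ar}(\star)$ --- exactly the shape of axiom set the paper uses when recovering $\mathsf{W}$-types from basic covers.

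Second, and more seriously, your passage from inductive predicates to dependent $\mathsf{W}$-types via $C(x,y) :\equiv \lambda z.(\Sigma b \in Br(x,y))\mathsf{Id}(ar(x,y,b),z)$ is the direction of the $(I,C)$/$(Br,ar)$ translation that Lemma \ref{eqder} explicitly flags as requiring $\mathsf{funext}$. Interpreting $\mathsf{dsup}(a,i,f)$ forces a transport of $f(\pi_1(w))$ along $\pi_2(w)$, and the dependent eliminator and its computation rule must then be validated against arguments $h' \in (\Pi z \in A)(C(a,i,z)\to\mathsf{Ind}_{I,C}(z))$ that are not of this canonical transported form; the $\eta$-rules for $\Sigma$ and $\Pi$ do not repair this, and the theorem is claimed for $\mathbf{MLTT_0}$ without $\mathsf{funext}$ or $\mathsf{K}$. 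The established proof runs the cycle the other way precisely to stay on the transport-free side: $\mathsf{W}$-types encode dependent $\mathsf{W}$-types by building a free $\mathsf{W}$-type of labelled trees and cutting it down with a recursively defined legality predicate (the carving-out step your sketch of the ``substantive direction'' omits --- the bare tree type you describe contains ill-indexed trees); dependent $\mathsf{W}$-types encode inductive predicates via $Br(x,y) :\equiv (\Sigma z \in A)C(x,y,z)$ and $ar :\equiv \pi_1$, with no identity types involved; inductive predicates encode basic covers, where the naive $(I_V,C_V)$ translation must further be refined by a canonicity predicate to validate elimination toward proof-dependent motives; and basic covers encode $\mathsf{W}$-types. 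You should reorient your cycle accordingly.
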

\begin{proof}
Theorem 6.1 of \cite{topcount}.
\end{proof}

In the presence of function extensionality, the above constructors enjoy neat categorical semantics, which we should review since it will be vital next for treating coinduction.

The type $\mathsf{W}_{A,B}$ can be shown to be the support of an initial algebra for the so-called \textit{polynomial endofunctor} $\mathsf{P}_{A,B}$ on the category of types
\[
\mathsf{P}_{A,B}(X) :\equiv (\Sigma x \in A)(B(x) \to X)
\]
Analogously, inductive predicates, inductive basic covers and dependent $\mathsf{W}$-types enjoy categorical semantics as initial algebras of the following \textit{dependent polynomial endofunctors} on the category of $A$-dependent types, respectively.
\begin{align*}
\mathsf{Der}_{I,C}(P)(x) & :\equiv (\Sigma y \in I(x))(\Pi z \in A)(C(x,y,z) \to P(z)) \\
\mathsf{Der}_{I,C,V}(P)(x) & :\equiv V(x) + (\Sigma y \in I(x))(\Pi z \in A)(C(x,y,z) \to P(z)) \\
\mathsf{Der}_{Br,ar}(P)(x) & :\equiv (\Sigma y \in I(x))(\Pi z \in Br(x,y))P(ar(x,y,z))
\end{align*}
We chose to reuse the name $\mathsf{Der}$ because the first endofunctor clearly is the interpretation under the propositions-as-types paradigm of the derivability constructor used to define inductive predicates in the Minimalist Foundation.

We recall that polynomial endofunctors of the form $\mathsf{Der}_{Br,ar}$ can be described in an arbitrary locally cartesian closed category $\mathcal{C}$ \cite{w-typelccc} as follows. Indexed containers are specified by diagrams of the form
\[
A \xleftarrow{ar} Br \xrightarrow{br} I \xrightarrow{i} A
\]
and the resulting endofunctor is obtained as the composition of the following functors
\[
\mathcal{C}/A \xrightarrow{ar^*} \mathcal{C}/Br \xrightarrow{\Pi_{br}} \mathcal{C}/I \xrightarrow{\Sigma_i} \mathcal{C}/A
\]
where $ar^*$ is the pullback functor, $\Pi_{br}$ the dependent product functor given by the locally cartesian closure of $\mathcal{C}$, and $\Sigma_i$ is the coproduct functor, that is postcomposition by $i$.

The following lemma formally proves that, in the presence of function extensionality, the two choices we discussed for formalising the intuition behind a dependent well-founded tree are equivalent.

\begin{lemma}\label{eqder}
Each functor of the form $\mathsf{Der}_{I,C}$ is isomorphic to one of the form $\mathsf{Der}_{Br,ar}$; assuming $\mathsf{funext}$, also the converse holds.
\end{lemma}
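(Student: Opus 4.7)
The plan is to exhibit, in each direction, an explicit indexed container that defines the target polynomial functor, and then check that the resulting types are canonically isomorphic.

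For the forward direction, given a parameter $C(x,y,z) \in \mathsf{U_0}\;[x \in A, y \in I(x), z \in A]$, I would define
\begin{align*}
Br(x,y) & :\equiv (\Sigma z \in A)C(x,y,z) \\
ar(x,y,w) & :\equiv \pi_1(w)
\end{align*}
keeping the same $A$ and $I$. The required natural isomorphism between $\mathsf{Der}_{I,C}(P)(x)$ and $\mathsf{Der}_{Br,ar}(P)(x)$ is the identity on the outer $(\Sigma y \in I(x))$ component, and on the inner part it is just the currying equivalence
\[
(\Pi z \in A)(C(x,y,z) \to P(z)) \;\cong\; (\Pi w \in (\Sigma z \in A)C(x,y,z))\,P(\pi_1(w))\text{,}
\]
with inverses $f \mapsto \lambda w.\,f(\pi_1(w))(\pi_2(w))$ and $g \mapsto \lambda z.\lambda p.\,g(\langle z,p \rangle)$. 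Both round-trips become definitional equalities using the $\eta$-rules for $\Pi$- and $\Sigma$-types assumed in $\mathbf{MLTT_0}$, so no function extensionality is required; naturality in $P$ is evident since the maps are uniform in $P$.

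For the converse, given $(Br, ar)$, I would take the same $A$ and $I$ and set
\[
C(x,y,z) :\equiv (\Sigma w \in Br(x,y))\,(ar(x,y,w) =_A z)
\]
i.e., $C(x,y,-)$ is the family of fibres of $ar(x,y,-) \colon Br(x,y) \to A$. The candidate isomorphism between the inner pieces is given by
\[
\Psi(g)(w) :\equiv g(ar(x,y,w))(\langle w, \mathsf{refl}\rangle)
\qquad
\Phi(f)(z)(\langle w, e\rangle) :\equiv \mathsf{transport}_{P}(e, f(w))
\]
going respectively from $\mathsf{Der}_{I,C}(P)(x)$ to $\mathsf{Der}_{Br,ar}(P)(x)$ and back.

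The composite $\Psi \circ \Phi$ reduces definitionally since transport along $\mathsf{refl}$ is the identity, leaving only the $\Pi$-$\eta$ rule to conclude $\Psi(\Phi(f)) = f$. The main obstacle is the other composite: $\Phi(\Psi(g))(z)(\langle w, e\rangle)$ can be shown equal to $g(z)(\langle w, e\rangle)$ only propositionally, by path induction on $e$ (which reduces to the case $e = \mathsf{refl}$, $z = ar(x,y,w)$). To pass from this pointwise propositional equality to the judgement $\Phi(\Psi(g)) = g$ required for an isomorphism of types, one needs to apply function extensionality twice (first on $C(x,y,z) \to P(z)$, then on the outer $(\Pi z \in A)$). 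This is exactly where $\mathsf{funext}$ enters, and explains why the second half of the statement is conditional on it. Naturality in $P$ again comes for free, since $\Phi$ and $\Psi$ are defined uniformly and transport commutes with precomposition of $P$-morphisms in the standard way.
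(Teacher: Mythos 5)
Your proposal is correct and follows exactly the same route as the paper: both directions use the identical parameter choices ($Br(x,y):\equiv(\Sigma z\in A)C(x,y,z)$ with $ar:\equiv\pi_1$, and $C(x,y,z):\equiv(\Sigma w\in Br(x,y))(ar(x,y,w)=_A z)$), the paper simply leaving the verification of the isomorphisms implicit where you spell out the currying equivalence and the transport/path-induction argument. Your analysis of precisely where $\mathsf{funext}$ is needed matches the asymmetry in the statement; the only nitpick is that $\mathsf{funext}$ yields a propositional rather than a judgmental equality $\Phi(\Psi(g))=g$, which is all that a type isomorphism requires anyway.
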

\begin{proof}
To prove the first statement assume a rule set $(I,C)$ over a set $A$ and consider the following parameters.
\begin{align*}
Br(x,y) & :\equiv (\Sigma z \in A)C(x,y,z) \\
ar(x,y,w) & :\equiv \pi_1(w)
\end{align*}
On the other hand, assuming the parameters $A$, $I$, $Br$ and $ar$, consider the axiom set $(I,C)$, where
\[
C(x,y,z) :\equiv (\Sigma w \in Br(x,y))ar(x,y,w) =_A z
\]\end{proof}

Finally, notice that if we were interested just in the proof-irrelevant, logical semantics of those inductive constructors, we could regard the endofunctors above simply as endomorphism on the preorder reflection of the category of $A$-dependent sets and the type constructors just as their smallest fixed points.

\subsection{Coinduction in Martin-Löf's type theory}
As mentioned, in Martin-Löf type theory, coinduction is usually considered through a construction dual to $\mathsf{W}$-types, called \textit{$\mathsf{M}$-types} (also known as \textit{non-wellfounded trees}). The parameters of $\mathsf{M}$-types are the same as those of $\mathsf{W}$-types; as their alternative name suggests, a type $\mathsf{M}_{A,B}$ is intuitively understood as the set of non(-necessarily)-wellfounded trees with nodes labelled by elements of $A$ and with branching function given by $B$. Due to the lack of computational meaning for coinduction, $\mathsf{M}$-types are not usually presented through explicit inference rules; instead, they are characterised semantically by a universal property dual to that of $\mathsf{W}$-types, namely by being terminal coalgebras for polynomial endofunctors $\mathsf{P}_{A,B}$. Analogously, there also exist \textit{dependent $\mathsf{M}$-types}, dualising dependent $\mathsf{W}$-types and defined as the terminal coalgebras of dependent polynomial endofunctors $\mathsf{Der}_{Br,ar}$ or, equivalently by Lemma \ref{eqder}, $\mathsf{Der}_{I,C}$.
In these forms, plain and dependent $\mathsf{M}$-types have been shown to exist in Martin-Löf's type theory extended with function extensionality and axiom $\mathsf{K}$ \cite{coinMLTT}, and in Homotopy Type Theory \cite{m-types}.

Here, to define coinductive predicates and positivity relations in Martin-Löf's type theory, we follow instead the axiomatic approach taken in \cite{mmr22}, where rules for proof-relevant coinductive positivity relations have been explicitly introduced in Martin-Löf's type theory by just taking the corresponding rules for the intensional level of the Minimalist Foundation after identifying propositions with sets.

\paragraph*{Rules for coinductive predicates in $\mathbf{MLTT_0}$}
\[
\textsf{F-$\mathsf{CoInd}$}\;\frac{}{\mathsf{CoInd}_{I,C} \in A \to \mathsf{U_0}}
\]
\[
\textsf{E-$\mathsf{CoInd}$}\;\frac{a \in A \quad i \in I(a) \quad p \in \mathsf{CoInd}_{I,C}(a)}{ \mathsf{des}(a,i,p) \in (\Sigma x \in A)(C(a,i,x) \times \mathsf{CoInd}_{I,C}(x))}
\]
\[
\textsf{I-$\mathsf{CoInd}$}\;\frac{
\begin{aligned}
& M(x) \; \mathsf{type} \; [x \in A] \\
& d(x,y,w) \in (\Sigma z \in A)(C(x,y,z) \times M(z)) \; [x \in A, y \in I(x), w \in P(x)] \\
& a \in A \quad m \in M(a)
\end{aligned}}{\mathsf{coind}(a,m,(x,y,w).d) \in \mathsf{CoInd}_{I,C}(a)}
\]

\paragraph*{Rules for coinductive positivity relations in $\mathbf{MLTT_0}$}
\[
\textsf{F-$\ltimes$}\;\frac{}{- \ltimes_{I,C} V \in A \to \mathsf{U_0}}
\]
\[
\textsf{E-corf-$\ltimes$}\;\frac{
a \in A \quad p \in a \ltimes_{I,C} V
}{\mathsf{corf}(a,p) \in V(a)}
\]
\[
\textsf{E-cotr-$\ltimes$}\;\frac{
a \in A \quad i \in I(a) \quad p \in a \ltimes_{I,C} V}{\mathsf{cotr}(a,i,p) \in (\Sigma x \in A)(C(a,i,x) \times x \ltimes_{I,C} V)}\]
\[
\textsf{I-$\ltimes$}\;\frac{\begin{aligned}
& M(x) \; \mathsf{type} \; [x \in A]\\
& q_1(x,y) \in V(x) \; [x \in A,y \in M(x)]\\
& q_2(x,y,w) \in (\Sigma z \in A)(C(x,y,z) \times x \ltimes_{I,C} V) \; [x \in A,y \in I(x),w\in M(x)]\\
& a \in A \quad p \in M(a)
\end{aligned}}
{\mathsf{coind}(a,p,q_1,q_2) \in a \ltimes_{I,C} V}
\]
Notice that in this way, coinductive predicates and positivity relations are interpreted as the greatest fixed point of the following \textit{dependent
copolynomial endofunctors} defined in Martin-Löf's type theory and viewed as endomorphisms on the preorder reflection of the category of $A$-dependent types, respectively.
\begin{align*}
\mathsf{Conf}_{I,C}(P)(x) &:\equiv (\Pi y \in I(x))(\Sigma z \in A)(C(x,y,z) \times P(z)) \\
\mathsf{Conf}_{I,C,V}(P)(x) & :\equiv V(x) \times (\Pi y \in I(x))(\Sigma z \in A)(C(x,y,z) \times P(z))
\end{align*}
Clearly, we also define, analogously to the inductive case, an alternative copolynomial endofunctor $\mathsf{Conf}_{Br,ar}$ induced by a indexed container $(A,I,Br,ar)$ as
\[
\mathsf{Conf}_{Br,ar}(P)(x) :\equiv (\Pi y \in I(x))(\Sigma z \in Br(x,y))P(ar(x,y,z))
\]
We can derive the dual result relating the two versions.
\begin{lemma}\label{eqder2}
Each functor of the form $\mathsf{Conf}_{I,C}$ is isomorphic to one of the form $\mathsf{Conf}_{Br,ar}$; assuming $\mathsf{funext}$, also the converse holds.
\end{lemma}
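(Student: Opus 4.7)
The plan is to dualise the proof of Lemma \ref{eqder}, using the same choices of parameters and simply swapping the roles of $\Sigma$ and $\Pi$ in the bookkeeping.

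For the first statement, given a rule set $(I,C)$ over $A$, I would take $Br(x,y) :\equiv (\Sigma z \in A)C(x,y,z)$ and $ar(x,y,w) :\equiv \pi_1(w)$, exactly as in Lemma \ref{eqder}. Unfolding, $\mathsf{Conf}_{Br,ar}(P)(x)$ becomes $(\Pi y \in I(x))(\Sigma w \in (\Sigma z \in A)C(x,y,z))P(\pi_1(w))$, whose inner $\Sigma$-type associates to $(\Sigma z \in A)(C(x,y,z) \times P(z))$ via a canonical iso whose round-trips are judgmentally the identity by the $\eta$-rule for $\Sigma$ in $\mathbf{MLTT_0}$. Composing these isos pointwise under the outer $\Pi$ yields an iso with $\mathsf{Conf}_{I,C}(P)(x)$, so no function extensionality is required in this direction.

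For the converse, given indexed container parameters $(A, I, Br, ar)$, I would take $C(x,y,z) :\equiv (\Sigma w \in Br(x,y))(ar(x,y,w) =_A z)$, again as in Lemma \ref{eqder}. Unfolding $\mathsf{Conf}_{I,C}(P)(x)$, the inner $\Sigma$-type rearranges (by $\Sigma$-associativity and $\Sigma$-swap, both judgmental via $\eta$) into $(\Sigma w \in Br(x,y))(\Sigma z \in A)((ar(x,y,w) =_A z) \times P(z))$, and the innermost factor is equivalent to $P(ar(x,y,w))$ by singleton contractibility, via a standard $\mathsf{J}$-based argument.

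The main obstacle lies in this last step: singleton contractibility is only a propositional isomorphism, since one of its two round-trips collapses to the identity only up to a $\mathsf{J}$-generated path. To lift the resulting pointwise-in-$y$ iso to an iso of the enclosing $\Pi$-type $(\Pi y \in I(x))(\ldots)$, one has to show that the induced composition on functions equals the identity function, and this is exactly where $\mathsf{funext}$ is invoked. This is the precise dual of the role played by $\mathsf{funext}$ in the converse of Lemma \ref{eqder}, and explains why extensionality is needed only for this second statement.
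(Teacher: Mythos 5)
Your proposal is correct and takes essentially the same approach as the paper, whose proof of this lemma simply reuses verbatim the two parameter translations from Lemma \ref{eqder} ($Br(x,y) :\equiv (\Sigma z \in A)C(x,y,z)$ with $ar :\equiv \pi_1$, and $C(x,y,z) :\equiv (\Sigma w \in Br(x,y))(ar(x,y,w) =_A z)$). The additional bookkeeping you supply --- $\Sigma$-associativity via $\eta$ in the first direction, and singleton contractibility plus $\mathsf{funext}$ under the outer $\Pi$ in the converse --- is exactly the verification the paper leaves implicit.
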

\begin{proof}
The proof is identical to that of Lemma \ref{eqder}.
\end{proof}
Finally, we can give a purely categorical description also of the endofunctors $\mathsf{Con}_{Br,ar}$ in an arbitrary locally cartesian closed category $\mathcal{C}$ as the composition of the following functors
\[
\mathcal{C}/A \xrightarrow{ar^*} \mathcal{C}/Br \xrightarrow{\Sigma_{br}} \mathcal{C}/I \xrightarrow{\Pi_i} \mathcal{C}/A
\]

By how they are defined, coinductive predicates do not have with inductive ones the same relation that $\mathsf{M}$-types have with $\mathsf{W}$-types: while coinductive predicates are defined as the greatest fixed points of the operator $\mathsf{Conf}$, which is itself dual to the defining operator $\mathsf{Der}$ for inductive predicates, $\mathsf{M}$-types and $\mathsf{W}$-types (and their dependent versions) are terminal coalgebras and initial algebras, respectively, of the \textit{same} endofunctors. For this reason, we cannot hope to get a fully symmetrical result to the one obtained in the inductive case. Nonetheless,  in Martin-Löf's type theory, thanks to the axiom of choice, we can prove that the class of constructors $\mathsf{Der}_{I,C}$ is far more expressive than its dual $\mathsf{Conf}_{I,C}$, and it subsumes them.

\begin{proposition}\label{last}
Each functor of the form $\mathsf{Conf}_{I,C}$ is isomorphic to one of the form
$\mathsf{Der}_{Br,ar}$.
\end{proposition}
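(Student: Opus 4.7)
The plan is to exploit the type-theoretic axiom of choice, which is derivable in $\mathbf{MLTT_0}$ (this is precisely the expressive asymmetry alluded to in the paragraph preceding the statement), to commute the outer $\Pi$ and the inner $\Sigma$ inside the definition of $\mathsf{Conf}_{I,C}$. Starting from
\[
\mathsf{Conf}_{I,C}(P)(x) \;=\; (\Pi y \in I(x))(\Sigma z \in A)(C(x,y,z) \times P(z))\text{,}
\]
the axiom of choice yields an isomorphism with
\[
(\Sigma f \in I(x) \to A)(\Pi y \in I(x))(C(x,y,f(y)) \times P(f(y)))\text{.}
\]
Distributing $\Pi$ over $\times$ (an instance of the usual $(\Pi y)(Q_1(y) \times Q_2(y)) \cong (\Pi y)Q_1(y) \times (\Pi y)Q_2(y)$, which holds up to $\mathsf{funext}$) further rewrites this as
\[
(\Sigma f \in I(x) \to A)\bigl((\Pi y \in I(x))C(x,y,f(y)) \;\times\; (\Pi y \in I(x))P(f(y))\bigr)\text{.}
\]

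The next step is to repackage the data so that the witness of $C$ gets absorbed into the indexing. Define a new indexed container $(A,I',Br,ar)$ by setting
\[
I'(x) :\equiv (\Sigma f \in I(x) \to A)(\Pi y \in I(x))C(x,y,f(y))\text{,}\quad Br(x,w) :\equiv I(x)\text{,}\quad ar(x,w,y) :\equiv \pi_1(w)(y)\text{.}
\]
Re-associating the nested $\Sigma$ in the display above gives the isomorphism
\[
\mathsf{Conf}_{I,C}(P)(x) \;\cong\; (\Sigma w \in I'(x))(\Pi y \in Br(x,w))P(ar(x,w,y)) \;=\; \mathsf{Der}_{I',Br,ar}(P)(x)\text{,}
\]
as required.

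The main obstacle is not really a mathematical difficulty but a bookkeeping one: one must check that the composite isomorphism is natural in $P$, so that $\mathsf{Conf}_{I,C}$ and $\mathsf{Der}_{I',Br,ar}$ are actually isomorphic \emph{as functors} on the category of $A$-dependent types, and not merely pointwise. Since each of the three steps (type-theoretic choice, $\Pi/\times$ distribution, $\Sigma$-reassociation) is itself given by a canonical $P$-polymorphic map, naturality follows routinely, and the verification can be left to the reader or a proof assistant. The use of $\mathsf{funext}$ in the second step is essential and mirrors the assumption already present in Lemma~\ref{eqder2}.
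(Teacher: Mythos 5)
Your proof is correct and follows essentially the same route as the paper: the type-theoretic axiom of choice commutes the outer $\Pi$ past the inner $\Sigma$, the $C$-witnesses are absorbed into the new index set, and one takes $Br(x,-) :\equiv I(x)$ with $ar$ the first projection of the choice function (the paper packages $I'(x)$ as $(\Pi y \in I(x))(\Sigma z \in A)C(x,y,z)$, which is isomorphic to your $\Sigma$-of-$\Pi$ form by another instance of choice). One small correction: given the $\eta$-rules for $\Pi$ and $\Sigma$ assumed in $\mathbf{MLTT_0}$, both the choice isomorphism and the distribution of $\Pi$ over $\times$ are judgmental, so $\mathsf{funext}$ is not actually essential here --- consistent with the paper stating the proposition without that hypothesis, in contrast to the converse direction of Lemma~\ref{eqder2}.
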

\begin{proof}
Assume to have a rule set $(I,C)$ over $A$. It is easy to check that the functor $\mathsf{Conf}_{I,C}$ is isomorphic to the functor $\mathsf{Der}$ constructed using the following parameters.
\begin{align*}
I'(x) & :\equiv (\Pi y \in I(x))(\Sigma z \in A)C(x,y,z) \\
Br(x,f) & :\equiv I(x) \\
ar(x,f,y) & :\equiv \pi_1(f(y))
\end{align*}
\end{proof}

\begin{corollary}\label{allco}
Coinductive predicates and coinductive positivity relations are mutually encodable in $\mathbf{MLTT_0}$. Moreover, they are both encodable in any theory extending $\mathbf{MLTT_0}$ in which the greatest fixed points of operators $\mathsf{Der}_{Br,ar}$ exist.
\end{corollary}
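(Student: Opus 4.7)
The plan is to interpret each of the two coinductive constructors in terms of the other by adapting the constructions already used in the extensional Minimalist Foundation proof (now under the propositions-as-types paradigm), and then to deduce the second assertion from Proposition \ref{last}.

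For the first direction, to encode coinductive predicates by coinductive positivity relations, I would specialise the parameter $V$ to the constantly-unit family $\lambda x.\mathsf{N_1}$. Under this choice the coreflection elimination $\mathsf{corf}$ becomes vacuous (it outputs a term in $\mathsf{N_1}$, always satisfied by $\star$), and the first hypothesis $q_1(x,y) \in V(x)$ in the introduction rule of $\ltimes$ is trivially discharged by $\lambda x.\lambda y.\star$; the remaining rules then translate directly into those of $\mathsf{CoInd}_{I,C}$. Hence setting $\mathsf{CoInd}_{I,C}(x) :\equiv x \ltimes_{I,C} (\lambda x.\mathsf{N_1})$ works.

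Conversely, to encode positivity relations by coinductive predicates, I would carry over the comprehension-style restriction used in $\mathbf{eMF}$: given $A$, $(I,C)$, and $V \in A \to \mathsf{U_0}$, define
\begin{align*}
A^V & :\equiv (\Sigma w \in A)V(w), \\
I^V(z) & :\equiv I(\pi_1(z)), \\
C^V(z,y,w) & :\equiv C(\pi_1(z),y,\pi_1(w)),
\end{align*}
and interpret $x \ltimes_{I,C} V :\equiv (\Sigma p \in V(x))\mathsf{CoInd}_{I^V,C^V}(\langle x,p \rangle)$. The coreflection rule for $\ltimes$ is then given by the first projection; the cotransitivity rule is obtained by applying $\mathsf{des}$ on the embedded coinductive-predicate witness and repackaging the resulting pair in $A^V$, whose first component yields the required $z \in A$, whose $C^V$-proof gives the required $C(x,i,z)$-proof, and whose second component supplies the lifted $\ltimes$-proof at $z$; the introduction rule is obtained by invoking $\mathsf{coind}$ on the lifted family $M'(\langle z,p \rangle) :\equiv M(z)$, with hypotheses translated accordingly.

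For the second assertion, the key input is Proposition \ref{last}, which gives an isomorphism $\mathsf{Conf}_{I,C} \cong \mathsf{Der}_{Br',ar'}$ for suitable $Br',ar'$. Thus, in any extension of $\mathbf{MLTT_0}$ where greatest fixed points of all dependent polynomial endofunctors $\mathsf{Der}_{Br,ar}$ exist, the greatest fixed point of $\mathsf{Conf}_{I,C}$ exists as well, and this is exactly $\mathsf{CoInd}_{I,C}$; positivity relations then follow from the first part. The main obstacle will be the bookkeeping in the second encoding: one has to exhibit, from a coinductive-predicate witness over $A^V$, the dependent triple of type $(\Sigma z \in A)(C(a,i,z) \times z \ltimes_{I,C} V)$ required by the cotransitivity rule, and in doing so correctly propagate the $V$-witness through the constructor. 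The arguments are routine but notationally heavy; no conceptually new ideas beyond the extensional proof are needed.
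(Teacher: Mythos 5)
Your proposal is correct and follows essentially the same route as the paper: the paper likewise encodes $\mathsf{CoInd}_{I,C}$ as the positivity relation over the total family (the positivity predicate $\mathsf{Pos}$), encodes $\ltimes$ via the $\Sigma$-comprehension $A^V$ with the restricted rule set, and derives the second assertion from Proposition \ref{last}. The only difference is that the paper leaves the proof-term bookkeeping implicit by appeal to the analogous argument at the intensional level of the Minimalist Foundation, whereas you sketch it explicitly.
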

\begin{proof}
Since the rules of the constructors are identical, the proof of the first statement is entirely analogous to that of the intensional level of the Minimalist Foundation. Then, it is enough to prove the second statement for coinductive predicates. If the theory admits a greatest fixed point for every endofunctor of the form $\mathsf{Der}_{Br,ar}$, then, by Proposition \ref{last}, it equivalently admits one also for every endofunctor of the form $\mathsf{Conf}_{I,C}$; the former can then be used to interpret the coinductive predicates $\mathsf{CoInd}_{I,C}(x)$.
\end{proof}

Since the hypotheses of the above proposition only require the existence of a greatest fixed point, it follows as an immediate corollary that any extension of $\mathbf{MLTT_0}$ in which dependent $\mathsf{M}$-types exist encodes coinductive predicates.

Finally, notice that if we were to take the alternative route of semantically defining coinductive predicates as terminal coalgebras of the endofunctors $\mathsf{Conf}_{I,C}$, Proposition \ref{last} would have implied that they are precisely a subclass of $\mathsf{M}$-types.

\section{Compatibility of (co)induction}\label{compsec}
The results obtained in the previous sections allow us to extend some of the compatibility results of \ref{comp} to (co)inductive definitions.

\begin{corollary}\label{coro}
\begin{enumerate}
\item $\mathbf{iMF}_\mathsf{ind}$ is compatible with $\mathbf{MLTT_0}+\mathsf{W}$;
\item $\mathbf{iMF}_\mathsf{ind}$ is compatible with $\mathbf{HoTT}$;
\item $\mathbf{iMF}_\mathsf{cind}$ is compatible with $\mathbf{MLTT_0}+\mathsf{W}+\mathsf{funext}+\mathsf{K}$;
\item $\mathbf{iMF}_\mathsf{cind}$ is compatible with $\mathbf{CIC}$;
\item $\mathbf{eMF}_\mathsf{cind}$ is compatible with $\mathcal{T}_{\mathbf{Topos}}$.
\item $\mathbf{eMF}_\mathsf{ind}$ is compatible with $\mathbf{CZF}+\mathsf{REA}$;
\item $\mathbf{eMF}_\mathsf{cind}$ is compatible with $\mathbf{CZF}+\mathsf{RRS}\text{-}\bigcup\mathsf{REA}$.
\end{enumerate}

In the last two points above, $\mathsf{REA}$ and $\mathsf{RRS}\text{-}\bigcup\mathsf{REA}$ are the \textit{Extension Axiom schemes} defined for Regular sets and Strongly Regular sets satisfying the Relation Reflection Scheme, respectively. They were introduced to accommodate inductive and coinductive definitions in constructive set theory; for their precise statements see \cite{czf}.
\end{corollary}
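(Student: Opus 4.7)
The plan is to extend each compatibility interpretation recalled in paragraph \ref{comp} by providing, in the target theory, an interpretation of the new constructor $\mathsf{Ind}$ or $\mathsf{CoInd}$ that validates its rules. Since the parameters $(A,I,C)$ of a rule set involve only sets, propositions and small propositions---entities already interpreted by the base compatibility---the whole work is to exhibit the resulting (co)inductive predicate itself as an object of the target theory, and to check that the formation, introduction, elimination and, in the inductive case, conversion rules hold.

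For items 1 and 2, the base interpretations of $\mathbf{iMF}$ into $\mathbf{MLTT_0}$ and into $\mathbf{HoTT}$ send rule sets to indexed containers valued, respectively, in small types and in h-sets. Since both $\mathbf{MLTT_0}+\mathsf{W}$ and $\mathbf{HoTT}$ include $\mathsf{W}$-types, Theorem \ref{allind} directly supplies the encoding of the inductive predicate constructor, which is all that $\mathbf{iMF}_\mathsf{ind}$ adds over $\mathbf{iMF}$. For items 4 and 5, the impredicative encodings of Proposition \ref{iimp} and Remark \ref{imp} provide at once interpretations of both $\mathsf{Ind}$ and $\mathsf{CoInd}$ in $\mathbf{CIC}$ and in $\mathcal{T}_{\mathbf{Topos}}$. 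For item 3, inductive predicates are handled as in item 1, while coinductive predicates are obtained by noting that $\mathbf{MLTT_0}+\mathsf{W}+\mathsf{funext}+\mathsf{K}$ admits dependent $\mathsf{M}$-types by \cite{coinMLTT}, so the greatest fixed points needed to apply Corollary \ref{allco} are available.

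For items 6 and 7, the base interpretation of $\mathbf{eMF}$ into $\mathbf{CZF}$ maps a rule set precisely to an Aczel-style inductive definition; the Regular Extension Axiom $\mathsf{REA}$ guarantees the existence of its smallest closed fixed point as a set \cite{aczelind,rathjen}, while $\mathsf{RRS}\text{-}\bigcup\mathsf{REA}$ does the same for the greatest consistent one \cite{czf}. Because in both cases the resulting class is bounded, it corresponds to a small proposition of $\mathbf{eMF}$, so the formation rules of $\mathsf{Ind}$ and $\mathsf{CoInd}$ are respected.

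The main obstacle is item 3, where one must match the terminal-coalgebra characterisation of dependent $\mathsf{M}$-types from \cite{coinMLTT}---aligned via Lemma \ref{eqder2} with the rule-set presentation of $\mathsf{Conf}_{I,C}$---with the axiomatic rules $\textsf{E-}\mathsf{CoInd}$ and $\textsf{I-}\mathsf{CoInd}$ postulated in the intensional level. This is essentially a careful transcription of the universal property into introduction and elimination form, but it is the only point where a non-trivial external input (the existence of $\mathsf{M}$-types) has to be combined with our axiomatisation, rather than being already packaged by a theorem of the previous section.
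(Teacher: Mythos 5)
Your handling of items 1 and 3--7 is essentially the paper's: item 1 via Theorem \ref{allind}, item 3 via the existence of dependent $\mathsf{M}$-types in $\mathbf{MLTT_0}+\mathsf{W}+\mathsf{funext}+\mathsf{K}$ combined with Corollary \ref{allco}, items 4 and 5 via the impredicative encodings of Proposition \ref{iimp} and Remark \ref{imp}, and items 6 and 7 via the corresponding results for $\mathbf{CZF}$. However, your argument for item 2 has a genuine gap. You assert that, since $\mathbf{HoTT}$ contains $\mathsf{W}$-types, Theorem \ref{allind} ``directly supplies the encoding'' of inductive predicates there. But the compatibility of $\mathbf{iMF}$ with $\mathbf{HoTT}$ recalled in the preliminaries interprets small propositions as h-propositions in the first universe, and $\mathsf{Ind}_{I,C}(a)$ is a small proposition of $\mathbf{iMF}_\mathsf{ind}$. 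The indexed $\mathsf{W}$-type obtained from a rule set is in general not an h-proposition---it may have many distinct canonical inhabitants---so the interpretation used for item 1 does not restrict to a compatible interpretation into $\mathbf{HoTT}$; nor can one simply apply propositional truncation, since the truncated type no longer validates the elimination rule. The paper's proof instead invokes Higher (quotient) Inductive Types: one postulates, besides the constructor $\mathsf{ind}$, an additional constructor trivialising the identity type, so that the resulting type is an h-proposition by construction while still satisfying the required introduction and elimination rules.

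A smaller point: you single out item 3 as ``the main obstacle'', but the matching of the terminal-coalgebra characterisation of dependent $\mathsf{M}$-types with the axiomatic rules for $\mathsf{CoInd}$ is precisely what the second statement of Corollary \ref{allco} (via Proposition \ref{last}) already packages; once the existence of the relevant greatest fixed points is granted, nothing further needs to be transcribed. The genuinely delicate item of the corollary is the second one.
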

\begin{proof}
\begin{enumerate}
\item Inductive predicates can be interpreted straightforwardly into their analogues defined in Martin-Löf's type theory. In turn, thanks to Theorem \ref{allind}, we know how to construct the latter using $\mathsf{W}$-types.
\item Despite $\mathbf{HoTT}$ being an extension of $\mathbf{MLTT_0}+\mathsf{W}$, the interpretation defined in the previous point ceases to be compatible once the target theory is changed to $\mathbf{HoTT}$. This is because propositions of the Minimalist Foundation should be interpreted as h-propositions. The solution is to interpret inductive predicates using the expressive power of Higher Inductive Types, as already observed in the case of inductive basic covers in \cite{ayb}, by following a standard trick which postulates, besides the introduction constructor $\mathsf{ind}$, another constructor whose function is to trivialise the identity type.
\item It is enough to extend the interpretation described in the first point by interpreting $\mathbf{iMF}$-coinductive predicates as $\mathbf{MLTT_0}$-coinductive predicates, once we know that the target theory supports them since by the results in \cite{indexed} it satisfies the hypotheses of Proposition \ref{allco}.
\item By Proposition \ref{iimp}.
\item By Remark \ref{imp}.
\item By Theorem 4.6 of \cite{mmr22}.
\item By Theorem 13.2.3 of \cite{czf}.
\end{enumerate}    
\end{proof}

\begin{remark}
In \cite{mmr22}, the authors proved a compatibility result for coinductive predicates in Martin-Löf's type theory alternative to the one in the second point of the above Corollary. There, instead of constructing coinductive predicates as $\mathsf{M}$-types, they assumed in the target theory the existence of a Palmgren's superuniverse and encoded them directly.
\end{remark}

\begin{remark}
Notice that the target theory $\mathbf{MLTT_0}+\mathsf{W}+\mathsf{funext}+\mathsf{K}$ of the third point of the above corollary can be interpreted in the h-set-theoretic fragment of $\mathbf{HoTT}$. However, the interpretation used there is not a compatible interpretation of $\mathbf{iMF}_\mathsf{cind}$ in $\mathbf{HoTT}$ because $\mathbf{MLTT_0}$-coinductive predicates are not necessarily h-propositions. Thus, compatibility remains an open problem since it is not evident how to generate coinductive h-propositions in $\mathbf{HoTT}$.
\end{remark}

\section{Conclusion and future work}
We have shown that (co)inductive methods of formal topology are equivalent to (co)inductive predicates and that, in turn, they can all be constructed in Martin-Löf's type theory using (non)-wellfounded trees.

In the future, we aim to motivate the addition of $\mathsf{M}$-types in the Minimalist Foundation and implement them in the quotient model and the realizability interpretation. Further goals would be to deepen the categorical notion of copolynomial functors in a locally cartesian closed category.

\subsection*{Acknowledgments}
We thank Francesco Ciraulo, Milly Maietti, and Giovanni Sambin for useful discussions on the subject of this paper.

\printbibliography

\appendix

\section{Rules}

In the following, when we give the rules of a type constructor, we interpret the formation rule's premises as parameters of the constructor; we then take them for granted in the premises of the other constructor's rules.

\paragraph*{Rules for dependent wellfounded trees in $\mathbf{MLTT_0}$}
\label{dwMLTT}
\[
\frac{\begin{aligned}
& A  \in \mathsf{U_0} \\
& I(x) \in \mathsf{U_0} \; [x \in A] \\
& Br(x,y) \in \mathsf{U_0} \; [x \in A \,, y \in I(x)] \\
& ar(x,y) \in Br(x,y) \to A \; [x \in A \,, y \in I(x)]
\end{aligned}
}{\mathsf{DW}_{Br,ar} \in A \to \mathsf{U_0}}
\textsf{ F-}\textsf{DW}
\]
\[
\frac{\displaystyle
a \in A \qquad i \in I(a) \qquad f \in (\Pi z \in Br(a,i))\mathsf{DW}_{Br,ar}(ar(a,i,z))
}{\mathsf{dsup}(a,i,f) \in \mathsf{DW}_{Br,ar}(a)}\textsf{ I-}\textsf{DW}
\]
\[
\frac{
\begin{aligned}
& M(x,w) \; \mathsf{type} \; [x \in A \,, w \in \mathsf{DW}_{Br,ar}(x)] \\
& d(x,y,h,k) \in M(x,\mathsf{dsup}(x,y,h)) \; \\
& \quad [x \in A, y \in I(x), \\
& \quad\quad h \in (\Pi z \in Br(x,y))\mathsf{DW}_{Br,ar}(ar(x,y,z)), \\
& \quad\quad\quad k \in (\Pi z \in Br(x,y))M(ar(x,y,z),h(z))] \\
& a \in A \qquad t \in \mathsf{DW}_{Br,ar}(a)
\end{aligned}}
{\mathsf{El_{DW}}(a,t,(x,y,h,k).d) \in M(a,t)}
\textsf{ E-}\textsf{DW}
\]
\[
\frac{
\begin{aligned}
& M(x,w) \; \mathsf{type} \; [x \in A \,, w \in \mathsf{DW}_{Br,ar}(x)] \\
& d(x,y,h,k) \in M(x,\mathsf{dsup}(x,y,h)) \; \\
& \quad [x \in A, y \in I(x), \\
& \quad\quad h \in (\Pi z \in Br(x,y))\mathsf{DW}_{Br,ar}(ar(x,y,z)), \\
& \quad\quad\quad k \in (\Pi z \in Br(x,y))M(ar(x,y,z),h(z))] \\
& a \in A \quad i \in I(a) \quad f \in (\Pi z \in Br(a,i))\mathsf{DW}_{Br,ar}(ar(a,i,z))
\end{aligned}}
{\mathsf{El_{DW}}(a,\mathsf{dsup}(a,i,f),d) = d(a,i,f,\lambda z.\mathsf{El_{DW}}(ar(a,i,z),f(z),d)) \in M(a,\mathsf{dsup}(a,i,f))}
\textsf{ C-}\textsf{DW}
\]

\paragraph*{Rules for inductive predicates in $\mathbf{MLTT_0}$}
\label{indMLTT}
\[
\frac{\begin{aligned}
& A  \in \mathsf{U_0} \\
& I(x) \in \mathsf{U_0} \; [x \in A] \\
& C(x,y) \in A \to \mathsf{U_0} \; [x \in A \,, y \in I(x)] \\
\end{aligned}
}{\mathsf{Ind}_{I,C} \in A \to \mathsf{U_0}}
\textsf{ F-}\textsf{Ind}
\]
\[
\frac{\displaystyle
a \in A \qquad i \in I(a) \qquad p \in (\Pi x \in A)(C(a,i,x) \to \mathsf{Ind}_{I,C}(x))
}{\mathsf{ind}(a,i,p) \in \mathsf{Ind}_{I,C}(a)}\textsf{ I-}\textsf{Ind}
\]
\[
\frac{
\begin{aligned}
& M(x,w) \; \mathsf{type} \; [x \in A \,, w \in \mathsf{Ind}_{I,C}(x)] \\
& d(x,y,h,k) \in M(x,\mathsf{ind}(x,y,h)) \; \\
& \quad [x \in A, y \in I(x), \\
& \quad\quad h \in (\Pi z \in A)(C(x,y,z) \to \mathsf{Ind}_{I,C}(x)), \\
& \quad\quad\quad k \in (\Pi z \in A)(\Pi q \in C(x,y,z))M(z,h(z,q))] \\
& a \in A \qquad p \in \mathsf{Ind}_{I,C}(a)
\end{aligned}}
{\mathsf{El_{Ind}}(a,p,(x,y,h,k).d) \in M(a,p)}
\textsf{ E-}\textsf{Ind}
\]
\[
\frac{
\begin{aligned}
& M(x,w) \; \mathsf{type} \; [x \in A \,, w \in \mathsf{Ind}_{I,C}(x)] \\
& d(x,y,h,k) \in M(x,\mathsf{ind}(x,y,h)) \; \\
& \quad [x \in A, y \in I(x), \\
& \quad\quad h \in (\Pi z \in A)(C(x,y,z) \to \mathsf{Ind}_{I,C}(x)), \\
& \quad\quad\quad k \in (\Pi z \in A)(\Pi q \in C(x,y,z))M(z,h(z,q))] \\
& a \in A \qquad i \in I(a) \qquad p \in (\Pi x \in A)(C(a,i,x) \to \mathsf{Ind}_{I,C}(x))
\end{aligned}}
{\mathsf{El_{Ind}}(a,\mathsf{ind}(a,i,p),(x,y,h,k).d) = d(a,i,p,\lambda z.\lambda q.\mathsf{El_{Ind}}(z,p(z,q),d)) \in M(a,\mathsf{ind}(a,i,p))}
\textsf{ C-}\textsf{Ind}
\]

\paragraph*{Rules for inductive basic covers in $\mathbf{MLTT_0}$}
\label{ibcMLTT}

\[
\frac{\begin{aligned}
& A \in \mathsf{U_0} \\
& I(x) \in \mathsf{U_0} \; [x \in A] \\
& C(x,y) \in A \to \mathsf{U_0} \; [x \in A,y \in I(x)] \\
& V \in A \to \mathsf{U_0}
\end{aligned}}{- \vartriangleleft_{I,C} V \in A \to \mathsf{U_0}}
\;\textsf{ F-}\textsf{$\vartriangleleft$}
\]
\[
\frac{a \in A \qquad r \in V(a)}
{\mathsf{rf}(a,r) \in a \vartriangleleft_{I,C} V}
\;\textsf{I\textsubscript{rf} -$\vartriangleleft$}
\]
\[
\frac{a \in A \qquad i \in I(a) \qquad r \in (\Pi x \in A)(C(a,i,x) \to x \vartriangleleft_{I,C} V)}
{\mathsf{tr}(a,i,r) \in a \vartriangleleft_{I,C} V}
\;\textsf{I\textsubscript{tr}-$\vartriangleleft$}
\]
\[
\frac{
\begin{aligned}
& M(x,w) \; \mathsf{type} \; [x \in A, w \in x \vartriangleleft_{I,C} V] \\
& q_1(x,y) \in M(x,\mathsf{rf}(x,y)) \; [x \in A \,, y \in V(a)] \\
& q_2(x,y,h,k) \in M(x,\mathsf{tr}(x,y,h))  \\
& \quad [x \in A\,, y \in I(x)\,, \\
& \quad\quad h \in (\Pi z \in A)(C(x,y,z) \to z \vartriangleleft_{I,C} V) \,, \\
& \quad\quad\quad k \in (\Pi z \in A)(\Pi q \in C(x, y, z))M(z,h(z,q)) ] \\
& a \in A \qquad p \in a \vartriangleleft_{I,C} V
   \end{aligned}}{\mathsf{El}_{\vartriangleleft}(a,p,(x,y).q_1,(x,y,h,k).q_2) \in M(a,p)}
\;\textsf{E -}\vartriangleleft
\]
\[
\frac{
\begin{aligned}
& M(x,w) \; \mathsf{type} \; [x \in A, w \in x \vartriangleleft_{I,C} V] \\
& q_1(x,y) \in M(x,\mathsf{rf}(x,y)) \; [x \in A \,, y \in V(a)] \\
& q_2(x,y,h,k) \in M(x,\mathsf{tr}(x,y,h))  \\
& \quad [x \in A\,, y \in I(x)\,, \\
& \quad\quad h \in (\Pi z \in A)(C(x,y,z) \to z \vartriangleleft_{I,C} V) \,, \\
& \quad\quad\quad k \in (\Pi z \in A)(\Pi q \in C(x, y, z))M(z,h(z,q)) ] \\
& a \in A \qquad r \in V(a)
   \end{aligned}}{\mathsf{El}_{\vartriangleleft}(a,\mathsf{rf}(a,r),q_1,q_2) = q_1(a,r) \in M(a,\mathsf{rf}(a,r))}
\;\textsf{C\textsubscript{rf} -}\vartriangleleft
\]
\[
\frac{
\begin{aligned}
& M(x,w) \; \mathsf{type} \; [x \in A, w \in x \vartriangleleft_{I,C} V] \\
& q_1(x,y) \in M(x,\mathsf{rf}(x,y)) \; [x \in A \,, y \in V(a)] \\
& q_2(x,y,h,k) \in M(x,\mathsf{tr}(x,y,h))  \\
& \quad [x \in A\,, y \in I(x)\,, \\
& \quad\quad h \in (\Pi z \in A)(C(x,y,z) \to z \vartriangleleft_{I,C} V) \,, \\
& \quad\quad\quad k \in (\Pi z \in A)(\Pi q \in C(x, y, z))M(z,h(z,q)) ] \\
& a \in A \qquad i \in I(a) \qquad r \in (\Pi x \in A)(C(a,i,x) \to x \vartriangleleft_{I,C} V)
   \end{aligned}}{\mathsf{El}_{\vartriangleleft}(a,\mathsf{tr}(a,i,r),q_1,q_2) = q_2(a,i,\lambda z.\lambda q. \mathsf{El}_{\vartriangleleft}(z,r(z,q),q_1,q_2)) \in M(a,\mathsf{tr}(a,i,r))}
\;\textsf{C\textsubscript{tr} -}\vartriangleleft
\]

\end{document}